\def\NZQ{\mathbb}               % the font for N,Z,Q,R,C
\def\ZZ{{\NZQ Z}}
\def\RR{{\NZQ R}}
\def\PP{{\NZQ P}}
\def\ab{{\bold a}}
\def\bb{{\bold b}}
\def\eb{{\bold e}}
\def\opn#1#2{\def#1{\operatorname{#2}}} % to make operators
\opn\cone{cone}
\opn\aff{aff} \opn\con{conv} \opn\relint{relint} \opn\st{st}
\opn\lk{lk} \opn\cn{cone} \opn\core{core} \opn\vol{vol}
\opn\link{link} \opn\star{star}
\opn\gr{gr}
\def\Fc{{\mathcal F}}
\newtheorem{Theorem}{Theorem}[section]
\newtheorem{Lemma}[Theorem]{Lemma}
\newtheorem{Corollary}[Theorem]{Corollary}
\newtheorem{Proposition}[Theorem]{Proposition}
\newtheorem{Remark}[Theorem]{Remark}
\newtheorem{Definition}[Theorem]{Definition}
\opn\dis{dis}
\opn\height{height}
\opn\dist{dist}
\def\pnt{{\raise0.5mm\hbox{\large\bf.}}}
\opn\Lex{Lex}
\begin{document}
\title{Equivalence classes for smooth Fano polytopes}
\author{Akihiro Higashitani}
\thanks{
{\bf 2010 Mathematics Subject Classification:} Primary 14M25; Secondary 52B20. \\
\, \, \, {\bf Keywords:}
smooth Fano polytope, toric Fano manifold, F-equivalent, I-equivalent, 
primitive collection, primitive relation.}
\address{Akihiro Higashitani,
Department of Mathematics, Kyoto University, Japan 
Kitashirakawa Oiwake-cho, Sakyo-ku, Kyoto, 606-8502, Japan}
\email{ahigashi@math.kyoto-u.ac.jp}
\begin{abstract}
Let $\Fc(n)$ be the set of smooth Fano $n$-polytopes up to unimocular equivalence. 
In this paper, we consider the F-equivalence or I-equivalence classes for $\Fc(n)$ 
and introduce F-isolated or I-isolated smooth Fano $n$-polytopes. 
First, we describe all of F-equivalence classes and I-equivalence classes for $\Fc(5)$. 
We also give a complete characterization of F-equivalence classes (I-equivalence classes) 
for smooth Fano $n$-polytopes with $n+3$ vertices and construct a family of I-isolated smooth Fano polytopes. 
\end{abstract}
\maketitle

\section{Introduction}
Let $P \subset \RR^n$ be a lattice $n$-polytope, i.e., 
a convex polytope of dimension $n$ whose vertices belong to $\ZZ^n$. 
Let $V(P)$ denote the set of the vertices of $P$. We recall some notions on lattice polytopes. 
\begin{itemize}
\item We say that $P$ is {\em reflexive} if $P$ contains the origin in its interior and its dual polytope 
$P^\vee = \{ y \in \RR^n : \langle x, y \rangle \leq 1 \text{ for every } x \in P\}$ is also a lattice polytope, 
where $\langle \cdot, \cdot \rangle$ stands for the usual inner product on $\RR^n$. 
\item We say that $P$ is {\em simplicial} if each facet of $P$ contains exactly $n$ vertices. 
\item We say that $P$ is a {\em smooth Fano polytope} if the origin is contained in its interior and 
the vertex set of each facet of $P$ forms a $\ZZ$-basis for $\ZZ^n$. 
In particular, every smooth Fano polytope is reflexive and simplicial. 
\item Two lattice polytopes $Q \subset \RR^n$ and $Q' \subset \RR^n$ are {\em unimodularly equivalent} 
if there is an affine map $\phi : \RR^n \rightarrow \RR^n$ such that $\phi(\ZZ^n)=\ZZ^n$ and $\phi(Q)=Q'$. 
\end{itemize}
As is well known, each smooth Fano $n$-polytope (up to unimodular equivalence) 
one-to-one corresponds to a toric Fano $n$-fold (up to isomorphism). 
Thus, knowing smooth Fano polytopes is equivalent to knowning toric Fano manifolds in some sense.

By many researchers, smooth Fano polytopes or toric Fano manifolds have been investigated. 
Especially, their classification for each dimension is one of the most interesting problem. 
The complete classification of smooth Fano polytopes or toric Fano manifolds 
was given by \cite{Bat81} and \cite{WW} in dimension 3, 
by \cite{Bat99} and \cite{Sato00} in dimension 4 and by \cite{KN} in dimension 5. 
Recently, an explicit algorithm classifying all smooth Fano polytopes 
has been constructed by M. {\O}bro in 2007 (see \cite{Obro07} and \cite{Obrophd}). 
The following table shows the number of unimodular equivalence classes for smooth Fano $n$-polytopes. 

\begin{table}[htb]
\begin{tabular}{|c|c|c|c|c|c|c|c|}
\hline
\text{$n$}                       &1 &2 &3  &4   &5   &6    &7      \\ \hline
\text{smooth Fano $n$-polytopes} &1 &5 &18 &124 &866 &7622 &72256  \\ \hline
\end{tabular}

\smallskip

\caption{The number of unimodular equivalence classes of smooth Fano $n$-polytopes with $n \leq 7$}
\end{table}

Let $\Fc(n)$ be the set of all unimodular equivalence classes for smooth Fano $n$-polytopes. 
The main concern of this paper is some equivalence classes for $\Fc(n)$ 
with respet to the following two equivalence relations:

\begin{Definition}[{\cite[Definition 1.1, 6.1]{Sato00}, \cite[Definition 1.1]{Obro08}}]\label{F-equiv}
We say that two smooth Fano $n$-polytopes $P$ and $Q$ are {\em F-equivalent} if there exists a sequence 
$P_0$, $P_1, \ldots,P_{k-1}$, $P_k$ of smooth Fano $n$-polytopes satisfying the following three conditions: 
\begin{itemize}
\item[(a)] $P$ and $Q$ are unimodularly equivalent to $P_0$ and $P_k$, respectively; 
\item[(b)] For each $1 \leq i \leq k$, we have $V(P_i) = V(P_{i-1}) \cup \{w\}$ with $w \notin V(P_{i-1})$ 
or $V(P_{i-1}) = V(P_i) \cup \{w\}$ with $w \not\in V(P_i)$ for some lattice point $w$; 
\item[(c)] If $w \in V(P_i) \setminus V(P_{i-1})$, then there exists a proper face $F$ of $P_{i-1}$ 
such that $w = \sum_{v \in V(F)}v$ and the set of facets of $P_i$ containing $w$ is equal to 
$$\{\con(\{w\}\cup(V(F') \setminus \{v\})) : F' \text{ is a facet of }P_{i-1}, F \subset F', v \in V(F)\}.$$ 
In other words, $P_i$ is obtained by taking a stellar subdivision of $P_{i-1}$ with $w$. 
If $w \in V(P_{i-1}) \setminus V(P_i)$, then the similar condition holds. 
\end{itemize}
\end{Definition}

\begin{Definition}[{\cite[Section 1]{Obro08}}]\label{I-equiv}
Two smooth Fano $n$-polytopes $P$ and $Q$ are called {\em I-equivalent} 
if there exists a sequence of smooth Fano $n$-polytopes satisfying 
the conditions {\em (a)} and {\em (b)} in Definition \ref{I-equiv}, 
i.e., {\em (c)} is not necessarily satisfied. 
\end{Definition}

Clearly, if two smooth Fano polytopes $P$ and $Q$ are F-equivalent, then those are also I-equivalent. 

\begin{Remark}\label{rima-ku}{\em 
(a) These definitions are also available to {\em complete nonsingular fans} by exchanging some terminologies 
with the ones for fans. For example, we may exchange the set of the vertices of a polytope 
into the set of the primitive vectors of 1-dimensional cones in a fan. 

(b) The condition (c) described in Definition \ref{F-equiv} is interpreted as the condition 
that two corresponding toric Fano manifolds $X(P_{i-1})$ and $X(P_i)$ 
are related with some equivariant blow-up or equivariant blow-down. 
On the other hand, I-equivalence does not necessarily correspond to an equilvariant blow-up (blow-down). 
}\end{Remark}

Let $\eb_1,\ldots,\eb_n$ be the unit coordinate vectors of $\RR^n$ and let 
$$T^n = \con(\{\eb_1,\ldots,\eb_n, -(\eb_1+\cdots+\eb_n)\}).$$ 
Then $T^n$ is the smooth Fano polytope $n$-polytope corresponding to the $n$-dimensional projective space $\PP^n$. 
Note that this is the unique smooth Fano $n$-polytope with $n+1$ vertices (up to unimodular equivalence). 
For a given positive integer $k$, let 
\begin{align*}
&V^{2k}=\con(\{\pm \eb_1,\ldots, \pm \eb_{2k}, \pm(\eb_1+ \cdots +\eb_{2k})\}), \\
&\widetilde{V}^{2k}=\con(\{\pm \eb_1,\ldots, \pm \eb_{2k}, \eb_1+ \cdots +\eb_{2k})\}. 
\end{align*}
Then it is well known that these are smooth Fano $2k$-polytopes. 
%The polytope $V^{2k}$ is called the {\em del Pezzo $2k$-polytope} and 
%$\widetilde{V}^{2k}$ is called the {\em pseudo del Pezzo $2k$-polytope}. 
Note that $V^{2k}$ and $\widetilde{V}^{2k}$ are I-equivalent and each of them is also I-equivalent to $T^{2k}$, 
while $V^{2k}$ and $\widetilde{V}^{2k}$ are NOT F-equivalent when $k \geq 2$. 

We say that a polytope $P$ is {\em pseudo-symmetric} if $P$ contains a facet $F$ such that 
$-F$ is also a facet of $P$. By Ewald \cite{Ewald}, it is proved that 
every pseudo-symmetric smooth Fano polytope is unimodularly equivalent to 
$$V^{2k_1} \oplus \cdots \oplus V^{2k_p} \oplus \widetilde{V}^{2l_1} \oplus \cdots \oplus \widetilde{V}^{2l_q} 
\oplus T_1 \oplus \cdots \oplus T_1,$$ where for two reflexive polytopes $P \subset \RR^d$ and $Q \subset \RR^e$, 
$P \oplus Q \subset \RR^{d+e}$ denotes the {\em free sum} of $P$ and $Q$, i.e., 
$$P \oplus Q = \con(\{(\alpha, 0) \in \RR^{d+e} : \alpha \in P\} \cup \{(0,\beta) \in \RR^{d+e} : \beta \in Q\}).$$
Note that the free sum of smooth Fano polytopes corresponds to the direct product of toric Fano manifolds. 

\bigskip

Sato \cite{Sato00} investigated the F-equivalence classes for $\Fc(n)$ as follows: 
For every $P \in \Fc(2)$ (resp. $P \in \Fc(3)$), $P$ is F-equivalent to $T^2$ (resp. $T^3$), 
namely, each of $\Fc(2)$ and $\Fc(3)$ has the unique F-equivalence class. 
Moreover, $\Fc(4)$ consists of three F-equivalence classes, 
one of which consists of 122 smooth Fano 4-polytopes being F-equivalent to $T^4$. 
Each of the others consists of one smooth Fano 4-polytope, which are $V^4$ and $\widetilde{V}^4$, respectively. 

Sato also conjectured that any smooth Fano $n$-polytope is either F-equivalent to $T^n$ or pseudo-symmetric. 
(\cite[Conjecture 1.3 and 6.3]{Sato00}). This is true when $n \leq 4$. 
However, {\O}bro \cite{Obro08} has given a counterexample of this conjecture. 
His counterexample is a smooth Fano 5-polytope $P$ with 8 vertices 
which is neither pseudo-symmetric nor I-equivalent to any other smooth Fano 5-polytope, 
i.e., for any $Q \in \Fc(5) \setminus \{P\}$, $Q$ is never I-equivalent to $P$. 
%there exist no smooth Fano 5-polytope $Q$ with 7 vertices and $R$ with 9 vertices such that 
%$V(Q) \subset V(P)$ and $V(P) \subset V(R)$, respectively. 

In this paper, for the further investigations of equivalence classes for $\Fc(n)$ 
with respect to both of F-equivalence and I-equivalence, we investigate smooth Fano 5-polytopes 
and determine all of F-equivalence classes as well as I-equivalence classes for $\Fc(5)$. 
Moreover, we introduce {\em F-isolated} or {\em I-isolated} smooth Fano polytope (See Section \ref{higher}) 
and we characterize completely F-isolated (I-isolated) smooth Fano $n$-polytopes with $n+3$ vertices. 
In addition, we construct a family of I-isolated smooth Fano $n$-polytopes for each $n \geq 5$. 

%\bigskip

A brief organization of this paper is as follows. 
First, in Section \ref{junbi}, we recall some notion on smooth Fano polytopes, fix some notation 
and prepare some lemmas for the main results. Next, in Section \ref{5dim}, 
we describe all of F-equivalence classes as well as I-equivalence classes for $\Fc(5)$. 
%Similarly, we also compute all of I-equivalence classes for $\Fc(6)$ and $\Fc(7)$. 
Moreover, in Section \ref{higher}, we introduce F-isolated or I-isolated smooth Fano polytopes and 
we give a complete characterization of F-isolated (I-isolated) smooth Fano $n$-polytopes with $n+3$ vertices 
(Theorem \ref{pic3cha}). In addition, %as a generalization of I-isolated smooth Fano 5-polytopes with 9 vertices or 10 vertices, 
we construct a family of I-isolated smooth Fano $n$-polytopes 
with $n+\rho$ vertices for $n \geq 5$ and $3 \leq \rho \leq n$ (Theorem \ref{general} and Corollary \ref{kei}).

%\bigskip

\section{Preliminaries}\label{junbi}

%Let $\Sigma$ be a finite complete simplicial fan in the lattice $N$. 
%\begin{itemize}
%\item We call a nonempty subset $P \subset G(\Sigma)$ a {\em primitive collection} of $\Sigma$ 
%if $\cn(P) \not\in \Sigma$ but $\cn(P \setminus \{x\}) \in \Sigma$ for each $x \in P$. 
%Let PC$(\Sigma)$ denote the set of all primitive collections of $\Sigma$. 
%\item Assume that $\Sigma$ is nonsingular. 
%Let $P=\{x_1,\ldots,x_l\} \in \text{PC}(\Sigma)$. Then there is a unique $\sigma(P) \in \Sigma$ 
%with $G(\sigma(P))=\{y_1,\ldots,y_m\}$ such that $$x_1+\cdots+x_l= a_1y_1+\cdots+a_my_m,$$ where $a_i \in \ZZ_{>0}$. 
%We call this relation the {\em primitive relation} for $P$. 
%\item For $P \in \text{PC}(\Sigma)$, if the primitive for $P$ is $x_1+\cdots+x_l= a_1y_1+\cdots+a_my_m$, 
%then $\deg(P)=l - (a_1+\cdots+a_m)$ is called the {\em degree} of $P$. 
%\end{itemize}
%For a smooth Fano polytope $P$, let $\Sigma_P$ denote the complete nonsingular fan 
%consisting of 

First, we note the following. 
For a smooth Fano polytope $P \subset \RR^n$, let $$\Sigma(P)=\{ \cn(F) \subset \RR^n : F \text{ is a face of }P\},$$ 
where $\cn(F)$ denotes the cone generated by $F$. 
Then $\Sigma(P)$ is a complete nonsingular fan. (For the terminologies on fans, conslut, e.g., \cite{Fulton}.)

Let $\Sigma$ be a complete nonsingular fan. 
We recall the useful notions, {\em primitive collections} and {\em primitive relations}, introduced by Batyrev \cite{Bat91}. 
Let $G(\Sigma)$ be the set of the primitive vectors of 1-dimensional cones in $\Sigma$. 
\begin{itemize}
\item We call a nonempty subset $A \subset G(\Sigma)$ a {\em primitive collection} of $\Sigma$ 
if $\cn(A)$ is not a cone in $\Sigma$ but $\cn(A \setminus \{x\})$ is a cone in $\Sigma$ for every $x \in A$. 
Let PC$(\Sigma)$ denote the set of all primitive collections of $\Sigma$. 
\item Let $A=\{x_1,\ldots,x_l\} \in \text{PC}(\Sigma)$. If $x_1+ \cdots + x_l \not= 0$, 
then there is a unique cone $\cn(\{y_1,\ldots,y_m\})$ in $\Sigma$ 
such that $x_1+\cdots+x_l= a_1y_1+\cdots+a_my_m,$ where $a_i \in \ZZ_{>0}$. 
We call the relation $x_1+ \cdots + x_l=0$ or $x_1+\cdots+x_l= a_1y_1+\cdots+a_my_m$ the {\em primitive relation} for $A$. 
\item For $A \in \text{PC}(\Sigma)$, if the primitive relation for $A$ is $x_1+\cdots+x_l= a_1y_1+\cdots+a_my_m$, 
then $\deg(A)=l - (a_1+\cdots+a_m)$ is called the {\em degree} of $A$. 
\end{itemize}
Notice that these definitions are also available to smooth Fano polytopes. 
In the case of smooth Fano polytopes $P$, we use a notation PC$(P)$ instead of PC$(\Sigma(P))$.

%It is known that for a complete nonsingular fan $\Sigma$, 
%the degrees of the primitive collections of $\Sigma$ are all positive 
%if and only if there exists a smooth Fano polytope $P$ such that $\Sigma=\Sigma(P)$ (see \cite{Bat99} or \cite{Sato00}). 
%Moreover, the primitive relations of smooth Fano $n$-polytopes with $n+2$ or $n+3$ vertices 
%are completely characterized as follows. 

\begin{Proposition}[{\cite[Theorem 3.10]{Sato00}, \cite{Bat99}}]\label{tokuchou}
For a complete nonsingular fan $\Sigma$, 
the degrees of the primitive collections of $\Sigma$ are all positive 
if and only if there exists a smooth Fano polytope $P$ such that $\Sigma=\Sigma(P)$. 
\end{Proposition}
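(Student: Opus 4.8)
The plan is to prove the two implications separately. For ``$\Sigma=\Sigma(P)$ for some smooth Fano polytope $P$ $\Rightarrow$ every primitive collection has positive degree'' I would use a direct convexity estimate, and for the converse I would set $P:=\con(G(\Sigma))$ and prove that $P$ is a smooth Fano polytope with $\Sigma(P)=\Sigma$.

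\emph{The easy direction.} Let $A=\{x_1,\dots,x_l\}\in\mathrm{PC}(\Sigma)$. If its primitive relation is $x_1+\cdots+x_l=0$ then $\deg(A)=l\geq 2>0$; otherwise it reads $x_1+\cdots+x_l=a_1y_1+\cdots+a_my_m$ with all $a_j>0$ and $\sigma_0:=\cn(\{y_1,\dots,y_m\})\in\Sigma$. Then $F:=\con(\{y_1,\dots,y_m\})$ is a face of $P$; pick a facet $G\supseteq F$ and let $u_G\in\ZZ^n$ be the functional with $\langle u_G,v\rangle=1$ on $V(G)$ and $\langle u_G,x\rangle\leq 1$ on $P$. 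Pairing the primitive relation with $u_G$ gives $a_1+\cdots+a_m=\sum_{i=1}^{l}\langle u_G,x_i\rangle\leq l$, so $\deg(A)\geq 0$; if equality held then each $x_i$ would be a lattice point of $G$, hence a vertex of $G$ since $G$ is a unimodular simplex, and expanding the $x_i$ in the $\ZZ$-basis $V(G)$ and comparing with the primitive relation --- using that the $x_i$ are pairwise distinct and the $a_j$ positive --- would force $\{x_1,\dots,x_l\}=\{y_1,\dots,y_m\}$ and hence $\cn(A)=\sigma_0\in\Sigma$, contradicting $A\in\mathrm{PC}(\Sigma)$. Therefore $\deg(A)>0$.

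\emph{The converse.} Assume every primitive collection of $\Sigma$ has positive degree and set $P:=\con(G(\Sigma))$; since $G(\Sigma)$ positively spans $\RR^n$ (completeness), $P$ is an $n$-polytope with $0$ in its interior. For a maximal cone $\sigma=\cn(\{v_1,\dots,v_n\})\in\Sigma$, where $v_1,\dots,v_n$ is a $\ZZ$-basis by nonsingularity, let $u_\sigma\in\ZZ^n$ satisfy $\langle u_\sigma,v_i\rangle=1$ for all $i$. The whole statement reduces to the claim: \emph{for every maximal $\sigma$ and every $w\in G(\Sigma)$ one has $\langle u_\sigma,w\rangle\leq 1$, with equality only if $w\in\{v_1,\dots,v_n\}$.} Granting this, the hyperplane $\{x:\langle u_\sigma,x\rangle=1\}$ supports $P$ and meets it exactly in the unimodular simplex $\con(\{v_1,\dots,v_n\})$, which is therefore a facet of $P$; and conversely every facet $F$ of $P$ is of this form --- pick $n$ affinely independent vertices of $F$, apply the claim to a maximal cone $\sigma$ containing an interior point of the full-dimensional cone they generate, and compare the two supporting functionals. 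Hence the facets of $P$ are precisely the simplices $\con(\{v_1,\dots,v_n\})$ over maximal $\sigma$, each with vertex set a $\ZZ$-basis, and taking cones over faces of $P$ returns exactly $\Sigma$; so $P$ is a smooth Fano polytope with $\Sigma(P)=\Sigma$.

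\emph{Proof of the claim --- the main obstacle.} Argue by contradiction: suppose $\langle u_\sigma,w\rangle\geq 2$ for some maximal $\sigma$ and some $w\in G(\Sigma)$, and choose such a pair maximizing $m^*:=\langle u_\sigma,w\rangle$ over all maximal cones and all generators. Then $w\notin\sigma$, so $\cn(\{v_1,\dots,v_n,w\})$ is not a cone of $\Sigma$ (a nonsingular cone has at most $n$ generators), hence a minimal subset $B$ of $\{v_1,\dots,v_n,w\}$ with $\cn(B)\notin\Sigma$ is a primitive collection $A$; since no subset of $\{v_1,\dots,v_n\}$ is a primitive collection, $A=\{w\}\cup\{v_i:i\in S\}$ with $\emptyset\neq S\subseteq\{1,\dots,n\}$. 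Writing the primitive relation of $A$ as $w+\sum_{i\in S}v_i=\sum_j a_j y_j$ and pairing with $u_\sigma$, positivity of $\deg(A)$ forces $\sum_j a_j\leq|S|$, whence $m^*+|S|=\sum_j a_j\langle u_\sigma,y_j\rangle\leq m^*\sum_j a_j\leq m^*|S|$ --- already absurd when $|S|=1$. The delicate point --- and where I expect the real work to lie --- is to ensure that one may take $|S|=1$ (equivalently, that $\cn(\{w,v_j\})\notin\Sigma$ for some $j$), or else to refine this crude estimate for longer primitive collections; the equality assertion is then obtained by applying the same argument to a hypothetical $w\notin\sigma$ with $\langle u_\sigma,w\rangle=1$. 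Alternatively the claim can be bypassed through the toric dictionary: a smooth Fano polytope $P$ with $\Sigma(P)=\Sigma$ exists if and only if $X(\Sigma)$ is a toric Fano manifold, i.e.\ if and only if $-K_{X(\Sigma)}$ is ample, and by Kleiman's criterion together with the description of the cone of curves of a nonsingular complete toric variety via primitive collections, ampleness of $-K_{X(\Sigma)}$ is exactly the positivity of all the degrees $\deg(A)$; this is essentially \cite{Bat99}.
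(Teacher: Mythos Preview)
The paper does not supply a proof of this proposition at all; it is quoted as a known result with references to Sato and Batyrev, and is then used as a black box. So there is no ``paper's own proof'' to compare with --- the comparison is necessarily with the cited literature.

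Your easy direction is fine. For the converse, the elementary attempt has a genuine gap, which you yourself flag: after choosing $(\sigma,w)$ maximizing $m^*=\langle u_\sigma,w\rangle\ge 2$ and extracting a primitive collection $A=\{w\}\cup\{v_i:i\in S\}$, the chain
\[
m^*+|S|=\sum_j a_j\langle u_\sigma,y_j\rangle\le m^*\sum_j a_j\le m^*\,|S|
\]
is a contradiction only when $|S|=1$; for $|S|\ge 2$ it merely says $m^*\ge |S|/(|S|-1)$, which is compatible with $m^*\ge 2$. There is no reason in general that some two-element set $\{w,v_j\}$ fails to span a cone of $\Sigma$, so you cannot force $|S|=1$. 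The difficulty is real: the bound $\langle u_\sigma,y_j\rangle\le m^*$ coming from maximality is too crude, while the sharper bound $\langle u_\sigma,y_j\rangle\le 1$ is exactly what you are trying to prove. (Note, incidentally, that once the inequality is established your treatment of the equality case \emph{does} go through for arbitrary $|S|$, since then $\langle u_\sigma,y_j\rangle\le 1$ is available.)

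Your fallback through the toric dictionary --- ampleness of $-K_{X(\Sigma)}$ via Kleiman together with Batyrev's description of the cone of curves by primitive relations, giving $(-K_{X(\Sigma)}\cdot C_A)=\deg(A)$ --- is exactly the argument in the cited references and is the intended proof. So the proposal is ultimately correct, but only by way of that alternative; the purely elementary route, as written, does not close.
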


Moreover, the primitive relations in smooth Fano $n$-polytopes with $n+2$ or $n+3$ vertices 
are completely characterized as follows.

\begin{Proposition}[{\cite[Theorem 1]{Kl88}}]\label{pic2}
Let $P$ be a smooth Fano $n$-polytope with $n+2$ vertices and let $V(P)=\{v_1,\ldots,v_{n+2}\}$. 
Then the primitive relations in $P$ are (up to renumeration of the vertices) of the form 
\begin{align*}
&v_1+\cdots+v_k=0 \; \text{ for some } \; 2 \leq k \leq n, \\
&v_{k+1}+\cdots + v_{n+2} = a_1v_1+\cdots+a_kv_k \; \text{ with }a_i \geq 0 \text{ and } n+2-k > a_1+\cdots+a_k. 
\end{align*}
\end{Proposition}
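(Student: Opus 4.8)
The plan is to use that a smooth Fano $n$-polytope $P$ with $n+2$ vertices is, in particular, a \emph{simplicial} $n$-polytope with $n+2$ vertices, and to combine the resulting combinatorial rigidity with the positivity of degrees furnished by Proposition \ref{tokuchou}.

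\emph{Step 1: the two primitive collections.} The elements of $\mathrm{PC}(P)$ are exactly the minimal non-faces of the boundary complex of $P$, so I would first pin these down via the Gale transform $\bar v_1,\dots,\bar v_{n+2}$ of the vertices. Since $P$ is a simplicial $n$-polytope with $n+2$ vertices, this is a one-dimensional Gale diagram and no $\bar v_i$ equals $0$; by Gale duality a set $S\subseteq V(P)$ is the vertex set of a face of $P$ iff the complementary Gale vectors include both a positive and a negative one. Hence the minimal non-faces are precisely $A=\{v_i:\bar v_i>0\}$ and $B=\{v_i:\bar v_i<0\}$: they are disjoint, $A\cup B=V(P)$, and $|A|,|B|\ge 2$ (if, say, $|A|=1$, the single vertex in $A$ would be a non-face, which is absurd). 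Renumbering, $A=\{v_1,\dots,v_k\}$ and $B=\{v_{k+1},\dots,v_{n+2}\}$ with $2\le k\le n$. (Alternatively one may simply quote the classification of simplicial polytopes with $d+2$ vertices.)

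\emph{Step 2: locating the right-hand sides of the primitive relations.} The key lemma is: if the primitive relation of a primitive collection $C$ is $\sum_{v\in C}v=\sum_{j}a_jy_j\neq 0$, then $\{y_j\}\cap C=\emptyset$. Indeed, if $y_1\in C$ then
\[
\sum_{v\in C\setminus\{y_1\}}v=(a_1-1)y_1+\sum_{j\ge 2}a_jy_j\in\cn(\{y_1,\dots,y_m\}),
\]
while the left-hand point, being the sum of the generators of the cone $\cn(C\setminus\{y_1\})$ (a cone of $\Sigma(P)$ since $|C|\ge 2$), lies in the relative interior of $\cn(C\setminus\{y_1\})$; as each point of $\RR^n$ lies in the relative interior of a unique cone of the complete fan $\Sigma(P)$, the cone $\cn(C\setminus\{y_1\})$ must be a face of $\cn(\{y_1,\dots,y_m\})$, whence $C\subseteq\{y_1,\dots,y_m\}$ — impossible, since $\{y_1,\dots,y_m\}$ is a face of $P$ and $C$ is not. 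Applying this to $C=A$ and $C=B$ and using $A\sqcup B=V(P)$, the right-hand side of the primitive relation of $A$ is supported on a \emph{proper} subset of $B$ (proper, since it is a face while $B$ is not), and symmetrically for $B$.

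\emph{Step 3: one relation is trivial, and conclusion.} Write the primitive relations as $R_A:\ \sum_{i\le k}v_i=\sum_{i>k}c_iv_i$ and $R_B:\ \sum_{i>k}v_i=\sum_{i\le k}d_iv_i$ with $c_i,d_i$ nonnegative integers; these are linearly independent and span the two-dimensional space $L$ of linear relations among the vertices (here I use that the vertices span $\RR^n$). Since $0$ lies in the interior of $P$, there is a strictly positive relation $\mu\in L$, say $\mu=\alpha R_A+\beta R_B$; evaluating $\mu$ at a coordinate $i\le k$ with $d_i=0$ and at one $i>k$ with $c_i=0$ (both exist by Step 2) gives $\alpha>0$ and $\beta>0$. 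If moreover $R_A$ and $R_B$ both had nonzero right-hand side, picking $i>k$ with $c_i\ge 1$ and $i\le k$ with $d_i\ge 1$ and evaluating $\mu$ there would give $\beta>\alpha$ and $\alpha>\beta$, a contradiction. Hence, after possibly swapping $A$ and $B$, we may assume $v_1+\dots+v_k=0$; then the other relation is $v_{k+1}+\dots+v_{n+2}=a_1v_1+\dots+a_kv_k$ with $a_i\ge 0$ (by Step 2 its support lies in $A$), and the inequality $n+2-k>a_1+\dots+a_k$ is exactly the assertion $\deg(B)>0$, which holds by Proposition \ref{tokuchou} because $P$ is a smooth Fano polytope. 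The main obstacle is Steps 1 and 2 — the Gale-diagram structure of $P$ and the relative-interior argument — after which Step 3 is just the short positivity computation above.
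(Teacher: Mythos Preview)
The paper does not give a proof of this proposition: it is quoted as a known result from Kleinschmidt \cite{Kl88} and used only as a preliminary. Your argument is correct and self-contained, so there is nothing to compare against in the paper itself.

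A couple of remarks on your write-up. In Step~1 you should say explicitly why no $\bar v_i$ vanishes: if some $\bar v_i=0$ then $P$ would be a pyramid with apex $v_i$, hence the base (with $n+1$ vertices) would be a facet, contradicting simpliciality. In Step~2 your relative-interior argument is the standard one; the conclusion that the right-hand side of a primitive relation is disjoint from the primitive collection is a well-known fact (cf.\ Batyrev \cite{Bat91}, Casagrande \cite{Cas03}). In Step~3, linear independence of $R_A$ and $R_B$ is immediate once you note (as you do) that some $c_{i_0}=0$ by properness of the support; you use this implicitly and it would be cleaner to state it. Finally, the degeneration where \emph{both} right-hand sides vanish is allowed by the statement (take all $a_i=0$) and is consistent with your Step~3, so no extra case is needed.

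Kleinschmidt's original argument proceeds differently: he classifies the underlying fans as those of projectivized sums of line bundles over projective spaces and reads off the relations from that description. Your Gale-diagram-plus-positivity route is more elementary and stays entirely within the combinatorics of $P$; the bundle picture, on the other hand, makes the geometric meaning of the integers $a_i$ transparent.
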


\begin{Proposition}[{\cite[Theorem 6.6]{Bat91}}]\label{pic3}
Let $P$ be a smooth Fano $n$-polytope with $n+3$ vertices. Then one of the following holds: 
\begin{itemize}
\item[(i)] $P$ consists of three disjoint primitive collections, i.e., 
$\text{{\em PC}}(P)=\{A_1,A_2,A_3\}$ with $A_i \cap A_j = \emptyset$ for each $i \not= j$; 
\item[(ii)] $|\text{{\em PC}}(P)|=5$ and there is $(p_0,p_1,p_2,p_3,p_4) \in \ZZ_{>0}^5$ with $p_0+\cdots+p_4=n+3$ 
such that the primitive relations in $P$ are of the forms
\begin{equation}\label{pic3prim}
\begin{aligned}
v_1+\cdots+v_{p_0}+y_1+\cdots+y_{p_1}&=c_2z_2+\cdots+c_{p_2}z_{p_2}+(d_1+1)t_1+\cdots+(d_{p_3}+1)t_{p_3}, \\
y_1+\cdots+y_{p_1}+z_1+\cdots+z_{p_2}&=u_1+\cdots+u_{p_4}, \\
z_1+\cdots+z_{p_2}+t_1+\cdots+t_{p_3}&=0, \\
t_1+\cdots+t_{p_3}+u_1+\cdots+u_{p_4}&=y_1+\cdots+y_{p_1}, \\
u_1+\cdots+u_{p_4}+v_1+\cdots+v_{p_0}&=c_2z_2+\cdots+c_{p_2}z_{p_2}+d_1t_1+\cdots+d_{p_3}t_{p_3}, 
\end{aligned}
\end{equation}
where $V(P)=\{v_1,\ldots,v_{p_0},y_1,\ldots,y_{p_1},z_1,\ldots,z_{p_2},t_1,\ldots,t_{p_3},u_1,\ldots,u_{p_4}\}$ 
and $c_2,\ldots,c_{p_2},d_1,\ldots,d_{p_3} \in \ZZ_{\geq 0}$ and 
the degree of each of these primitive collections is positive. 
\end{itemize}
\end{Proposition}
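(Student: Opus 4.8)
The plan is to translate the statement entirely into the language of primitive collections and primitive relations of the complete nonsingular fan $\Sigma(P)$, and then to prove it as the Picard-number-three case of Batyrev's structure theorem for smooth complete toric varieties. Since $P$ has $n+3$ vertices, $\Sigma(P)$ has exactly $n+3$ rays, so $\#G(\Sigma(P)) - n = 3$; equivalently, the $\ZZ$-module of integral relations $\sum \lambda_i v_i = 0$ among the primitive ray generators has rank $3$, because the $v_i$ span $\RR^n$. Every primitive relation lies in this fixed $3$-dimensional space, and this is the finiteness input that controls the whole argument.

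\emph{The combinatorial dichotomy.} First I would use Batyrev's interaction lemma for primitive collections of a complete simplicial fan: if $A$ and $B$ are distinct, non-disjoint primitive collections, then $\cn\big((A\cup B)\setminus(A\cap B)\big)$ is not a cone of the fan, so $(A\cup B)\setminus(A\cap B)$ contains a primitive collection distinct from $A$ and $B$. I would then split into cases. If the primitive collections of $\Sigma(P)$ are pairwise disjoint, then, using that their relations must span the $3$-dimensional relation space while deleting any one element from a primitive collection yields a cone of $\Sigma(P)$, one checks that there are exactly three of them; this is case (i). If instead some two primitive collections overlap, I would feed the interaction lemma back into itself: the new primitive collections it produces are again among finitely many, and a careful bookkeeping of which unions can arise --- crucially using $\rho = 3$ to exclude ``too many'' primitive collections --- forces a rigid configuration. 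Namely, there is a partition $V(P) = B_0 \sqcup B_1 \sqcup B_2 \sqcup B_3 \sqcup B_4$ into five nonempty blocks, of sizes $p_0,\dots,p_4$ with $\sum_i p_i = n+3$, such that the primitive collections of $\Sigma(P)$ are precisely the five ``consecutive'' unions $B_0\cup B_1$, $B_1\cup B_2$, $B_2\cup B_3$, $B_3\cup B_4$, $B_4\cup B_0$.

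\emph{Normalizing the relations.} Writing $B_0=\{v_i\}$, $B_1=\{y_i\}$, $B_2=\{z_i\}$, $B_3=\{t_i\}$, $B_4=\{u_i\}$, I would then compute the five primitive relations. Their right-hand sides are forced by smoothness: each maximal cone of $\Sigma(P)$ is unimodular and $G(\Sigma(P))$ consists of exactly the elements of the five blocks, so writing the relation of $B_i\cup B_{i+1}$ and comparing it with that of $B_{i+1}\cup B_{i+2}$ (which share the block $B_{i+1}$) shows that only generators from at most two blocks can appear on the right, with nonnegative integral coefficients, and that, after a cyclic relabeling of the blocks, the relation of $B_2\cup B_3$ may be taken to be $\sum_{z\in B_2}z + \sum_{t\in B_3}t = 0$. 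Solving the resulting linear system inside the $3$-dimensional relation lattice pins down the other four relations; the unimodularity of the two maximal cones erected over $B_2\cup B_3$ is exactly what produces the discrepancy of $1$ between the coefficient $(d_j+1)$ of $t_j$ in the first relation of \eqref{pic3prim} and the coefficient $d_j$ in the fifth, and it delivers all the stated coefficients $c_i,d_j\in\ZZ_{\geq 0}$. Finally, since $P$ is a smooth Fano polytope, Proposition \ref{tokuchou} guarantees that every primitive collection of $\Sigma(P)$ has strictly positive degree, which is the last assertion in both (i) and (ii).

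\emph{The main obstacle.} The genuinely hard step is the combinatorial dichotomy --- showing that two overlapping primitive collections force exactly five, in the pentagonal pattern, and never a larger or more tangled configuration. This is precisely where the hypothesis $\rho = 3$ is essential (for $\rho\geq 4$ no such short list survives), and carrying it out cleanly needs a somewhat delicate finite case analysis of how the chains of primitive collections generated by the interaction lemma are forced to close up. Once that rigid structure is established, passing to the normal form \eqref{pic3prim} is a bounded computation in linear algebra over $\ZZ$, constrained by unimodularity --- tedious but routine.
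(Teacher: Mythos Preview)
The paper does not prove this proposition at all; it is stated with a citation to \cite[Theorem~6.6]{Bat91} and used as a black box in the sequel. So there is no ``paper's own proof'' to compare against.

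Your outline is a faithful sketch of Batyrev's original argument: the combinatorial interaction lemma for primitive collections, the disjoint/overlapping dichotomy, the pentagonal block structure forced in the overlapping case when the Picard number is three, and then the linear-algebra normalization of the five relations using unimodularity of the maximal cones and Proposition~\ref{tokuchou} for positivity of degrees. You correctly identify the hard step as the combinatorial dichotomy, and you are right that this is where the case analysis lives. Two small caveats. First, in case~(i) your justification that there are \emph{exactly} three disjoint primitive collections is a bit thin: one needs that the associated relations are $\ZZ$-linearly independent (easy) and that they generate the relation lattice (this uses that every ray lies in some primitive collection, which in turn uses completeness). Second, in case~(ii) the assertion that after relabeling one of the five relations is $\sum z_i + \sum t_j = 0$ deserves one sentence more: it follows because any complete fan has at least one primitive collection summing to zero, and then cyclic symmetry of the pentagon lets you place it at $B_2\cup B_3$. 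With those points filled in, your sketch would reconstruct Batyrev's proof; but since the present paper merely quotes the result, none of this is needed here.
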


%In the sequel, we will often apply the following lemma for the proofs of Theorem \ref{pic3cha} and Theorem \ref{general}. 
We also recall the following useful lemma.

\begin{Lemma}[{\cite[Lemma 3.1]{Obro08}, see also \cite[Corollary 4.4]{Cas03}}]\label{ext}
Let 
\begin{equation}\label{ex1}v_1+\cdots+v_k=a_1w_1+\cdots+a_mw_m\end{equation} 
be a linear relation of some vertices of a smooth Fano polytope $P$ such that 
$a_i \in \ZZ_{>0}$ and $\{v_1,\ldots,v_k\} \cap \{w_1,\ldots,w_m\} = \emptyset$. 
Suppose that $k-(a_1+\cdots+a_m)=1$ and $\con(\{w_1,\ldots,w_m\})$ is a face of $P$. 
Then \eqref{ex1} is a primitive relation. Moreover, for each face $F$ of $P$ with $\{w_1,\ldots,w_m\} \subset V(F)$, 
$\con((V(F) \cup \{v_1,\ldots,v_k\}) \setminus \{v_i\})$ is also a face of $P$ for every $1 \leq i \leq k$. 
\end{Lemma}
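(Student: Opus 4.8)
The plan is to exploit Proposition \ref{tokuchou}, which tells us that a complete nonsingular fan is the face fan of a smooth Fano polytope precisely when every primitive collection has positive degree. First I would check that $\{v_1,\dots,v_k\}$ is itself a primitive collection of $P$. Since $\con(\{w_1,\dots,w_m\})$ is a face of $P$, the cone $\con(\{w_1,\dots,w_m\})$ lies in $\Sigma(P)$, and the relation \eqref{ex1} expresses $v_1+\cdots+v_k$ as a positive integral combination of the generators of that face; in particular $v_1+\cdots+v_k\neq 0$. If $\con(\{v_1,\dots,v_k\})$ were a face of $P$, then by simpliciality the $v_i$ would be part of a $\ZZ$-basis, which is incompatible with the nontrivial relation \eqref{ex1} (a basis element cannot equal a nonnegative combination of others plus the remaining basis elements unless all coefficients vanish and $k=1$, contradicting $k-(a_1+\cdots+a_m)=1$ with $a_i>0$, $m\geq 1$). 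So $\con(\{v_1,\dots,v_k\})\notin\Sigma(P)$. It remains to see that $\con(\{v_1,\dots,v_k\}\setminus\{v_i\})$ is a face for each $i$; this is the part I expect to require the most care. Here I would either invoke the cited source (\cite[Corollary 4.4]{Cas03}) for this structural fact, or argue directly: removing $v_i$, the sum $\sum_{j\neq i}v_j = a_1w_1+\cdots+a_mw_m - v_i$, and one checks using the wall-crossing/primitive-relation description of smooth complete fans that the $k-1$ rays $\{v_j: j\neq i\}$ together span a cone of $\Sigma(P)$. Granting this, $\{v_1,\dots,v_k\}$ is a primitive collection with primitive relation \eqref{ex1}, whose degree is $k-(a_1+\cdots+a_m)=1>0$, consistent with Proposition \ref{tokuchou}. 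This proves the first assertion.

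For the second assertion, fix a face $F$ of $P$ with $\{w_1,\dots,w_m\}\subset V(F)$ and fix $i$. I want to show $\con((V(F)\cup\{v_1,\dots,v_k\})\setminus\{v_i\})$ is a face of $P$. The natural approach is to produce a linear functional $\psi\in(\RR^n)^\vee$ that is identically $1$ on every vertex in $(V(F)\cup\{v_1,\dots,v_k\})\setminus\{v_i\}$ and strictly less than $1$ on every other vertex of $P$; since $P$ is reflexive with the origin in its interior, such a supporting hyperplane exhibits the desired set as (the vertex set of) a face. Because $F$ is a face, there is a functional $\psi_F$ with $\psi_F\equiv 1$ on $V(F)$ and $\psi_F<1$ elsewhere on $V(P)$. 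One would then perturb $\psi_F$ toward the $v_j$'s: the relation \eqref{ex1} forces $\psi(v_1)+\cdots+\psi(v_k)=a_1\psi(w_1)+\cdots+a_m\psi(w_m)$, so if $\psi(w_l)=1$ for all $l$ and $\psi(v_j)=1$ for $j\neq i$, then automatically $\psi(v_i)=(a_1+\cdots+a_m)-(k-1)=0<1$, which is exactly what we need. The content is therefore to show one can deform $\psi_F$ so that it takes the value $1$ on $\{v_j:j\neq i\}$, stays $1$ on $V(F)$, and remains $<1$ on all remaining vertices; this is where smoothness (unimodularity of the facets) enters, guaranteeing enough room in the normal fan, and it is the technical heart of the argument.

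The main obstacle, then, is the combinatorial/geometric step of verifying that the enlarged cone is genuinely a face — i.e. that the candidate supporting functional exists and is strict off the claimed face. I would handle this by working inside $\Sigma(P)$ and using the standard fact that in a smooth complete fan, if $\sigma$ is a cone and a primitive collection $\{v_1,\dots,v_k\}$ has its "partial sums" compatible with $\sigma$ (which is precisely what the hypothesis $\{w_1,\dots,w_m\}\subset V(F)$ together with the degree-one relation encodes), then the cones $\con((V(F)\cup\{v_1,\dots,v_k\})\setminus\{v_i\})$ appear in the fan; alternatively, since the statement is explicitly attributed to \cite[Lemma 3.1]{Obro08} and \cite[Corollary 4.4]{Cas03}, the cleanest route is to cite those and only sketch the functional-perturbation argument above for completeness. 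Either way, the first assertion is immediate from Proposition \ref{tokuchou} once primitivity is established, and the second reduces to exhibiting the supporting hyperplane described above.
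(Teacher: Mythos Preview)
The paper does not supply its own proof of this lemma: it is stated with attribution to \cite[Lemma 3.1]{Obro08} and \cite[Corollary 4.4]{Cas03} and then used as a black box. So there is nothing in the paper to compare your argument against; in effect the paper's ``proof'' is exactly the citation you yourself fall back on at the end of your proposal.

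Evaluating your sketch on its own merits, two points deserve comment. First, the appeal to Proposition~\ref{tokuchou} is not doing any work: that proposition says every primitive collection of a smooth Fano polytope has positive degree, but it gives no mechanism for showing that a given set \emph{is} a primitive collection. Your argument that $\con(\{v_1,\dots,v_k\})$ is not a face is essentially correct (a supporting functional $\psi$ for such a face would give $k=\sum_j a_j\psi(w_j)<\sum_j a_j=k-1$, since each $w_j$ lies strictly below the face by disjointness and simpliciality), though your phrasing in terms of ``basis elements'' obscures this. Second, and more seriously, the two places you flag as ``requiring the most care''---that each $\con(\{v_1,\dots,v_k\}\setminus\{v_i\})$ is a face, and that the enlarged set $(V(F)\cup\{v_1,\dots,v_k\})\setminus\{v_i\}$ spans a face---are precisely the content of the lemma, and you do not prove them. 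The functional-perturbation idea for the second assertion is the right shape, but ``smoothness guarantees enough room in the normal fan'' is not an argument; one actually has to produce the functional, and this is where the degree-one hypothesis is essential (with degree $\geq 2$ the conclusion is false). Since you ultimately recommend citing \cite{Obro08,Cas03}, your proposal and the paper end up in the same place, but the intermediate sketch should not be mistaken for a proof.
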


%\bigskip

\section{Equivalence classes for smooth Fano 5-polytopes}\label{5dim}

In this section, we describe all F-equivalence or I-equivalence classes for $\Fc(5)$. 

\begin{Proposition}
The number of F-equivalence classes for $\Fc(5)$ is $27$ and 
the number of I-equivalence classes for $\Fc(5)$ is $4$. 
\end{Proposition}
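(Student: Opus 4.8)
The plan is to reduce the statement to a finite computation on the classified list of all $866$ smooth Fano $5$-polytopes, available from {\O}bro's algorithm (\cite{Obro07}, \cite{Obrophd}). Regard this list as an $866$-element set $\mathcal{L}$ of unimodular equivalence classes, and define two graphs on $\mathcal{L}$: in $\Gamma_F$ join $[P]$ and $[Q]$ by an edge whenever one is obtained from the other by a single stellar subdivision in the sense of condition (c) of Definition \ref{F-equiv}; in $\Gamma_I$ join $[P]$ and $[Q]$ whenever $|V(Q)| = |V(P)| + 1$ and some representative of $[P]$ has its vertex set contained in $V(Q)$. Unwinding Definitions \ref{F-equiv} and \ref{I-equiv}, the F-equivalence classes of $\Fc(5)$ are exactly the connected components of $\Gamma_F$ and the I-equivalence classes are exactly those of $\Gamma_I$; thus it suffices to build the two graphs and count components, which gives $27$ and $4$, respectively.

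To generate $\Gamma_F$, fix $P \in \mathcal{L}$ and run over the finitely many proper faces $F$ of $P$, setting $w_F = \sum_{v \in V(F)} v$. If $w_F \in V(P)$ there is nothing to do; otherwise, since $\cn(F)$ is a nonsingular cone of $\Sigma(P)$ and $w_F$ is the sum of its primitive generators, the stellar subdivision of $\Sigma(P)$ with $w_F$ is again a complete nonsingular fan $\Sigma_F$ in which $w_F$ is the primitive generator of the new ray. By Proposition \ref{tokuchou}, $\Sigma_F = \Sigma(P_F)$ for a smooth Fano $5$-polytope $P_F$ precisely when every primitive collection of $\Sigma_F$ has positive degree; when this holds, $P_F$ is unimodularly equivalent to a unique member of $\mathcal{L}$, and we record the edge $[P]$--$[P_F]$. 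Since F-moves are symmetric, the blow-down moves are obtained automatically once every element of $\mathcal{L}$ has been processed, and a union-find pass over the resulting edges produces the components of $\Gamma_F$.

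For $\Gamma_I$ it is convenient to enumerate downward: for each $Q \in \mathcal{L}$ and each $w \in V(Q)$, form $P := \con(V(Q) \setminus \{w\})$ and test whether $P$ is a smooth Fano $5$-polytope whose vertex set is exactly $V(Q) \setminus \{w\}$ --- a finite check (the origin lies in the interior, none of the remaining points is lost in taking the convex hull, and the vertex set of every facet is a $\ZZ$-basis of $\ZZ^5$). When it passes, $P$ is unimodularly equivalent to a unique member of $\mathcal{L}$, and we record the edge $[P]$--$[Q]$. Because adding a vertex $w$ to a polytope $P$ forces $P_i = \con(V(P) \cup \{w\})$, and because any representative of $[Q]$ can be carried to the fixed one in $\mathcal{L}$ by a unimodular map, this downward enumeration already lists every I-move; the components of $\Gamma_I$ are therefore the I-equivalence classes, and there turn out to be four --- one of them the singleton $\{[P]\}$, where $P$ is {\O}bro's $8$-vertex polytope, known to be I-isolated (\cite{Obro08}).

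The difficulty is organizational rather than conceptual: one needs a reliable normal form (or a decidable isomorphism test) for smooth Fano polytopes up to unimodular equivalence, so that every polytope produced by a move can be matched against $\mathcal{L}$, and the search over all proper faces of all $866$ polytopes must be arranged so that nothing is overlooked. Structural input both reduces the work and provides checks: $T^5$ is the only polytope with $6$ vertices; the $7$- and $8$-vertex cases are constrained by Propositions \ref{pic2} and \ref{pic3}; Lemma \ref{ext} identifies the degree-$1$ relations among vertices produced by stellar subdivisions along edges; the F-partition must refine the I-partition; each of the $27$ F-classes and $4$ I-classes must have an explicit representative (for instance $T^5$ and {\O}bro's polytope); and running the same procedure in dimensions $\leq 4$ must recover Sato's counts of F-equivalence classes.
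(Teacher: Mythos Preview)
Your proposal is correct and follows essentially the same approach as the paper: the proposition is established by a finite computer search over {\O}bro's list of the $866$ smooth Fano $5$-polytopes, building the F- and I-adjacency graphs and counting their connected components. The paper does not spell out the algorithm as you do; it simply reports the outcome, presenting the $38$ polytopes not F-equivalent to $T^5$ (and the three I-isolated ones among them) in a figure keyed to the Graded Ring Database IDs, from which the counts $27$ and $4$ are read off.
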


The following Figure \ref{figure} shows all of the smooth Fano 5-polytopes which are not F-equivalent to $T^5$. 
Each circled number corresponds to one smooth Fano 5-polytope with $5+\rho$ vertices for $3 \leq \rho \leq 7$ 
and the number is the ID of the database ``Graded Ring Database'' of smooth Fano polytopes, 
which is based on the algorithm by {\O}bro (\cite{Obro07, Obrophd}). See 
\url{http://grdb.lboro.ac.uk/forms/toricsmooth} 

There are 2 smooth Fano 5-polytopes with 8 vertices, 
12 ones with 9 vertices, 16 ones with 10 vertices, 6 ones with 11 vertices and 2 ones with 12 vertices 
which are not F-equivalent to $T^5$. See also the table \ref{ro-}. 
Moreover, each of the double circled numbers corresponds to a smooth Fano 5-polytope $P$ such that 
for any $Q \in \Fc(5) \setminus \{P\}$, $Q$ is not I-equivalent to $P$ (i.e., I-isolated, see Section \ref{higher}). 
Note that {\O}bro's example \cite{Obro08} corresponds to the double circled number 164. 
In addition, if two circled numbers are connected by a line, then those are F-equivalent. 
We can see that there are 26 ``connected components'' in Figure \ref{figure}, 
each of which corresponds to an F-equivalence class for $\Fc(5)$. On the other hand, 
all of the smooth Fano 5-polytopes corresponding to the single circled numbers are I-equivalent to $T^5$.

Although there are only 2 smooth Fano 4-polytopes which are not F-equivalent to $T^4$, 
which are $V^4$ and $\widetilde{V^4}$ (see \cite{Sato00}), there are 38 smooth Fano 5-polytopes 
which are not F-equivalent to $T^5$, i.e., there are 38 circled numbers in Figure \ref{figure}. 

\bigskip

\begin{figure}[htb!]
\centering
\includegraphics[scale=0.3]{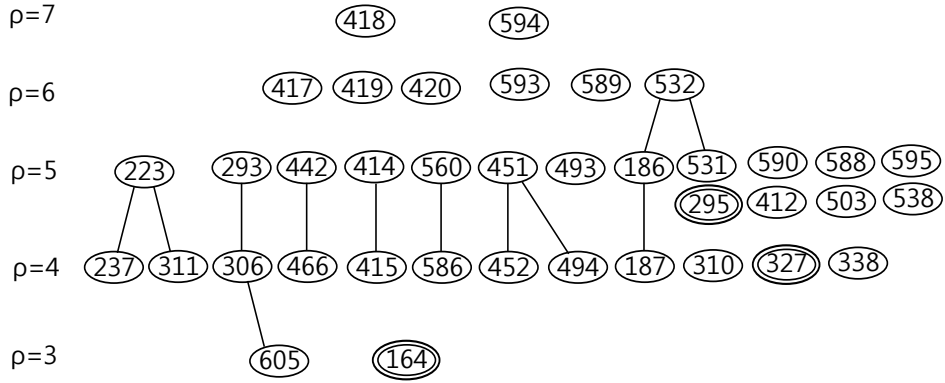}
\caption{smooth Fano 5-polytopes which are not F-equivalent to $T^5$}\label{figure}
\end{figure}

\begin{table}[htb]
\begin{tabular}{|c|c|c|c|c|c|}
\hline
\text{$\rho$}                      &3 &4  &5  &6 &7 \\ \hline
\text{smooth Fano $5$-polytopes}   &2 &12 &16 &6 &2 \\ \hline
\end{tabular}

\smallskip

\caption{The number of smooth Fano 5-polytopes with $5+\rho$ vertices 
which are not F-equivalent to $T^5$}\label{ro-}
\end{table}

%\bigskip

\section{I-isolated smooth Fano polytopes}\label{higher}

We introduce the notions, F-isolated and I-isolated, for smooth Fano polytopes. 
\begin{Definition}
Let $P$ be a smooth Fano $n$-polytope. 
We say that $P$ is {\em F-isolated} (resp. {\em I-isolated}) 
if for any $P' \in \Fc(n) \setminus \{P\}$, $P'$ is not F-equivalent (resp. I-equivalent) to $P$. 
\end{Definition}
Obviously, $P$ is F-isolated if $P$ is I-isolated.

%\subsection{I-isolated smooth Fano $n$-polytopes with $n+3$ vertices} 

First, we characterize the primitive relations in I-isolated smooth Fano $n$-polytopes 
with $n+3$ vertices. Note that such a polytope is of dimension at least 5.

\begin{Theorem}\label{pic3cha}
Let $P$ be a smooth Fano $n$-polytope with $n+3$ vertices. 
Then the following three conditions are equivalent: 
\begin{itemize}
\item[(a)] $P$ is F-isolated; 
\item[(b)] $P$ is I-isolated; 
\item[(c)] 
%All of the primitive collections of $P$ are $V \cup Y$, $Y \cup \{z\}$, $\{z,t\}$, $\{t\} \cup U$ and $U \cup V$, 
%where $V=\{v_1,\ldots,v_a\}$, $Y=\{y_1,\ldots,y_b\}$ and $U=\{u_1,\ldots,u_b\}$, 
%$a \geq 2$ and $b \geq 2$ with $n=a+2b-1$ and $V(P)=V \cup Y \cup \{z,t\} \cup U$, 
%and the corresponding primitive relations are of the forms
All of the primitive relations in $P$ are of the forms
\begin{equation}\label{primrel}
\begin{aligned}
v_1+\cdots+v_a+y_1+\cdots+y_b&=(a+b-1)t, \\
y_1+\cdots+y_b+z&=u_1+\cdots+u_b, \\
z+t&=0, \\
t+u_1+\cdots+u_b&=y_1+\cdots+y_b, \\
u_1+\cdots+u_b+v_1+\cdots+v_a&=(a+b-2)t, 
\end{aligned}
\end{equation}
where $V(P)=\{v_1,\ldots,v_a,y_1,\ldots,y_b,z,t,u_1,\ldots,u_b\}$, 
$a \geq 2$ and $b \geq 2$ with $n=a+2b-1$. 
%where $V=\{v_1,\ldots,v_a\}$, $Y=\{y_1,\ldots,y_b\}$ and $U=\{u_1,\ldots,u_b\}$, 
%$a \geq 2$ and $b \geq 2$ with $n=a+2b-1$ and $V(P)=V \cup Y \cup \{z,t\} \cup U$. 
\end{itemize}
\end{Theorem}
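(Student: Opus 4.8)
The plan is to prove the chain $(c) \Rightarrow (b) \Rightarrow (a) \Rightarrow (c)$, using Proposition~\ref{pic3} to enumerate the possible primitive relations and Lemma~\ref{ext} to detect blow-ups and blow-downs. The implication $(b) \Rightarrow (a)$ is immediate from the definitions, as already noted after the Definition. So the real content is $(c) \Rightarrow (b)$ and $(a) \Rightarrow (c)$, and I would organize the argument around a careful analysis of which smooth Fano $n$-polytopes with $n+3$ vertices admit a blow-down (removing a vertex $w$ that is a sum of vertices of a proper face) or admit a blow-up that stays within the class of polytopes with $n+3$ vertices --- but since blow-ups add a vertex, an I-equivalence step from a polytope with $n+3$ vertices either goes to one with $n+2$ vertices (a blow-down) or to one with $n+4$ vertices, or more generally any step in an I-equivalence changes the vertex count by one. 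The key observation is that to show $P$ is I-isolated it suffices to show $P$ admits no blow-down at all (no vertex of $P$ is a positive lattice combination — in fact a sum over a face — of other vertices lying on a proper face) \emph{and} no blow-up (no proper face $F$ has the property that adding $w=\sum_{v\in V(F)}v$ yields a smooth Fano polytope); here I would use the fact, implicit in Definition~\ref{I-equiv} and Remark~\ref{rima-ku}, that an I-equivalence step is still a stellar-type operation on the level of adding/removing a single vertex, so isolatedness reduces to ruling out both a single blow-up and a single blow-down.

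For $(c) \Rightarrow (b)$, I would start from the explicit primitive relations \eqref{primrel} and verify directly that $P$ admits no blow-down: a blow-down would remove a vertex $w\in V(P)$ that equals $\sum_{v\in V(F)}v$ for a proper face $F$, equivalently $w$ together with $V(F)$ forms a primitive collection of degree $1$ whose primitive relation is $w + \sum_{v\in V(F)} (-v) = \ldots$; scanning the five primitive relations in \eqref{primrel} and checking their degrees (using that $\deg$ of each is positive by Proposition~\ref{pic3}(ii), and computing them from $a,b$), I would show none of them has the shape required for a vertex to be an honest sum over a face --- the relations all have the single vertex $t$ (with coefficient $a+b-1$ or $a+b-2\geq 2$, using $a,b\geq 2$) on the right, never with coefficient $1$, so no vertex is a ``stellar center'' of $P$ over a facet of a smaller polytope. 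Dually, for blow-ups I would use Lemma~\ref{ext}: if $P'$ is obtained from $P$ by adding $w=\sum_{v\in V(F)}v$ for a proper face $F$, then $P'$ has $n+4$ vertices and its primitive collections must satisfy Proposition~\ref{tokuchou}; I would show that any such candidate $w$ creates a primitive collection of nonpositive degree --- concretely, because the relations \eqref{primrel} already saturate the structure forced by Proposition~\ref{pic3}, adding any new vertex $w=\sum_{v\in V(F)}v$ forces a new primitive collection $\{w\}\cup(\text{something})$ whose degree computation comes out $\leq 0$, contradicting that $P'$ is a smooth Fano polytope. This gives that no I-equivalence step out of $P$ exists, hence $P$ is I-isolated.

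For $(a) \Rightarrow (c)$ I would argue contrapositively: assuming $P$ is \emph{not} of the form \eqref{primrel}, I produce a smooth Fano $n$-polytope $P'\neq P$ that is F-equivalent (hence it suffices to produce \emph{any} single blow-up or blow-down). Here I split on the two cases of Proposition~\ref{pic3}. In case (i) (three disjoint primitive collections $A_1,A_2,A_3$), I would show that one can always perform a blow-down: one of the $A_i$ has a primitive relation of degree forcing a vertex to be expressible over a face, or else one can blow down along a suitable face --- this is the Picard-rank-$3$ analogue of the fact that such polytopes are ``close to'' products of projective spaces, and Sato's and Batyrev's structure theory (Proposition~\ref{pic3}(i)) should give that these are never isolated. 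In case (ii), the primitive relations have the general shape \eqref{pic3prim} with parameters $(p_0,\ldots,p_4)$ and nonnegative integers $c_i,d_j$; the special form \eqref{primrel} is exactly the case $p_2=p_3=1$ (so there is a single $z$ and a single $t$), $p_1=p_4=b$, $p_0=a$, all $c_i$ absent (since $p_2=1$ there are no $c_i$'s, consistent with the $z$-coefficient $0$ in \eqref{primrel} matching $(a+b-1)t$ having no $z$-term), and the $d_j$ forced so that the right-hand sides are $(a+b-1)t$ and $(a+b-2)t$. So if $P$ is not of this form, then either $p_2\geq 2$ or $p_3\geq 2$ or some $c_i>0$ or the coefficient pattern differs; in each such sub-case I would exhibit an explicit blow-down by applying Lemma~\ref{ext} to one of the relations \eqref{pic3prim} of degree $1$ (and there is always one, or one can reduce a higher-degree relation), removing a vertex and landing on a genuinely different smooth Fano $n$-polytope with $n+2$ or $n+3$ vertices, and one checks using Proposition~\ref{pic2} or Proposition~\ref{pic3} that the result is indeed a valid smooth Fano polytope distinct from $P$.

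The main obstacle I anticipate is the bookkeeping in $(a)\Rightarrow(c)$ case (ii): showing that for \emph{every} parameter choice outside the distinguished family \eqref{primrel} there is a legal stellar subdivision or its inverse requires a somewhat delicate case analysis of the coefficients $c_i,d_j$ and the degrees of the five primitive collections in \eqref{pic3prim}, and one must be careful that the polytope one produces by blowing down is still smooth Fano (i.e., still has all primitive-collection degrees positive, by Proposition~\ref{tokuchou}) and is not unimodularly equivalent to $P$ itself. A secondary subtlety is verifying, in $(c)\Rightarrow(b)$, that no blow-up is possible: one has to rule out \emph{all} proper faces $F$ as stellar centers, which means understanding the face lattice of $P$ well enough --- this I would extract from the primitive collections via the standard correspondence (a subset spans a face iff it contains no primitive collection), and then the degree computation for the hypothetical new primitive collection finishes it. I expect both directions to go through once the correspondence between I-equivalence steps and single blow-ups/blow-downs is pinned down and Lemma~\ref{ext} is applied systematically.
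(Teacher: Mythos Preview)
Your proposal has a genuine gap in the implication $(c)\Rightarrow(b)$. You claim that an I-equivalence step ``is still a stellar-type operation on the level of adding/removing a single vertex,'' and that therefore it suffices to rule out blow-ups and blow-downs. This is precisely what I-equivalence does \emph{not} require: Definition~\ref{I-equiv} imposes only conditions (a) and (b) of Definition~\ref{F-equiv}, and Remark~\ref{rima-ku}(b) explicitly says that I-equivalence need not correspond to an equivariant blow-up or blow-down. To prove $P$ is I-isolated you must show that there is no smooth Fano $n$-polytope $Q$ with $V(Q)=V(P)\setminus\{w\}$ for \emph{any} vertex $w$ (not just vertices that are sums over a face), and no smooth Fano $n$-polytope $R$ with $V(R)=V(P)\cup\{x\}$ for \emph{any} lattice point $x$ (not just points of the form $\sum_{v\in V(F)}v$). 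Your scan of the five primitive relations to check that no vertex is a ``stellar center'' would establish only that $P$ is F-isolated, which is the weaker condition~(a). The paper's proof of $(c)\Rightarrow(b)$ does the stronger job: it fixes explicit coordinates, then for the ``$\not\exists Q\subset P$'' part it removes each vertex in turn and shows the remaining set cannot be the vertex set of a smooth Fano polytope (using Proposition~\ref{pic2} and an interior/sign argument); for the ``$\not\exists R\supset P$'' part it uses Lemma~\ref{ext} repeatedly to force many facets of $P$ to persist in $R$, pins the new vertex $x$ into a single remaining cone, and then uses a ridge argument to conclude $x=z+\gamma$ with $\gamma\in V\cup Y'\cup U'$, which contradicts simpliciality. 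None of this is captured by checking degrees of the primitive collections of $P$ alone.

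For $(a)\Rightarrow(c)$ your outline is in the right spirit but diverges from the paper in two respects. First, the paper disposes of Proposition~\ref{pic3}(i) (three disjoint primitive collections) by citing \cite[Corollary~6.13]{Sato00}, not by an ad hoc blow-down. Second, in case~(ii) the paper works almost entirely with blow-\emph{ups}: for each parameter constraint it exhibits a concrete stellar subdivision of $\Sigma(P)$ (e.g.\ at $w=v_1+z_1$, at $w=y_1+z_1$, at $w=t_1+u_1$, at $w=v_1+v_2$), computes the new primitive relations via \cite[Theorem~4.3]{Sato00}, and checks via Proposition~\ref{tokuchou} that all degrees remain positive, producing a smooth Fano $P'$ F-equivalent to $P$. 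Your plan to ``exhibit an explicit blow-down by applying Lemma~\ref{ext} to one of the relations of degree~$1$'' is not how the argument goes in general; the degrees of the five primitive collections in \eqref{pic3prim} need not include a degree-$1$ relation of the right shape, and the paper instead shows that failing each of the constraints $p_0\ge2$, $p_1+p_2-p_4=1$, $p_3+p_4-p_1=1$, $d_1=p_0+p_1-2$ allows a specific blow-up.
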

\begin{proof}
{\bf ((a) $\Longrightarrow$ (c))} 
Assume that $P$ is F-isolated. Since $|V(P)|=n+3$, from \cite[Corllary 6.13]{Sato00}, 
we may assume $P$ satisfies the conditions in Proposition \ref{pic3} (ii), i.e., 
the primitive relations in $P$ are of the forms \eqref{pic3prim}. 
%Assume that the primitive relations in $P$ are of the forms \eqref{pic3prim}. 

In the discussions below, by using the description of the primitive relations in 
a complete nonsingular fan obtained by introducing a new lattice point $w=\sum_{v \in G(\sigma)} v$ 
for some $\sigma \in \Sigma(P)$ and taking a stellar subdivision with the 1-dimensional cone generated by $w$, 
we prove that if there is no smooth Fano polytope $P'$ which is F-equivalent to $P$, 
then the primitive relations in $P$ is of the forms \eqref{primrel}. 
The description is explicitly given in \cite[Theorem 4.3]{Sato00}. 

For a face $F \subset P$, let $\Sigma_F^*(P)$ denote the new complete nonsingular fan 
obtained by taking a stellar subdivision of $\Sigma(P)$ with $\cn(\{w\})$, 
where $w = \sum_{v \in V(F)}v ( = \sum_{v \in G(\cn(F))}v)$. 
\begin{itemize}
\item Suppose $p_1=1$ or $p_4=1$. Then by \cite[Proposition 8.3]{Sato00}, we obtain a new smooth Fano $n$-polytope 
with $n+2$ vertices which is F-equivalent to $P$, a contradiction. Hence $p_1 \geq 2$ and $p_4 \geq 2$. 
\item Suppose $p_0=1$. Since $\{v_1,z_1\}$ is not contained in PC$(P)$, $\con(\{v_1,z_1\})$ is a face of $P$. 
We consider a stellar subdivision of $\Sigma(P)$ with a lattice point $w=v_1+z_1$ 
and the complete nonsingular fan $\Sigma_{\con(\{v_1,z_1\})}^*(P)$. 
Then the new primitive relations in $\Sigma_{\con(\{v_1,z_1\})}^*(P)$ concerning $w$ are 
\begin{align*}
&w+y_1+\cdots+y_{p_1}=(c_2-1)z_2+\cdots+(c_{p_2}-1)z_{p_2}+d_1t_1+\cdots+d_{p_3}t_{p_3}, \\
&w+z_2+\cdots+z_{p_2}+t_1+\cdots+t_{p_3}=v_1, \\
&u_1+\cdots+u_{p_4}+w=(c_2-1)z_2+\cdots+(c_{p_2}-1)z_{p_2}+(d_1-1)t_1+\cdots+(d_{p_3}-1)t_{p_3}. 
\end{align*}
(See \cite[Theorem 4.3]{Sato00}.) Since the degree of each of these new primitive collections is positive, 
by Proposition \ref{tokuchou}, there is a smooth Fano polytope $P'$ with $\Sigma_{\con(\{v_1,z_1\})}^*(P)=\Sigma(P')$, 
and in particular, $P'$ is F-equivalent to $P$, a contradiction. Hence $p_0 \geq 2$. 
\item Suppose $p_1+p_2 - p_4 \geq 2$. Then we see that the complete nonsingular fan $\Sigma_{\con(\{y_1,z_1\})}^*(P)$ 
comes from a smooth Fano polytope, a contradiction. Thus $p_1+p_2 - p_4 \leq 1$. 
On the other hand, since $\{y_1,\ldots,y_{p_1},z_1,\ldots,z_{p_2}\}$ is a primitive collection of $P$ and 
its degree is equal to $p_1 + p_2 - p_4$, we obtain that $p_1 + p_2 - p_4 \geq 1$ by Propoisition \ref{tokuchou}. 
Hence $p_1+p_2-p_4=1$. 
\item Similarly, suppose $p_3+p_4-p_1 \geq 2$. Then we see that the complete nonsingular fan $\Sigma_{\con(\{t_1,u_1\})}^*$(P) 
comes from a smooth Fano polytope, a contradiction. Thus $p_3 +  p_4- p_1 \leq 1$. 
Moreover, since $\{t_1,\ldots,t_{p_3},u_1,\ldots,u_{p_4}\}$ is a primitive collection of $P$ and 
its degree is equal to $p_3+p_4-p_1$, we have $p_3+p_4-p_1 \geq 1$ by Proposition \ref{tokuchou}. Hence $p_3+p_4-p_1 = 1$. 
\item Hence, in particular, we obtain $p_1+p_2-p_4=p_3+p_4-p_1=1$. This implies that $p_2=p_3=1$ and $p_1=p_4$. 
\item Suppose $p_0+p_1-(d_1+1) \geq 2$. Then we see that the complete nonsingular fan $\Sigma_{\con(\{v_1,v_2\})}^*(P)$ 
comes from a smooth Fano polytope, a contradiction. Thus $p_0+p_1-(d_1+1) \leq 1$. 
On the other hand, since $\{v_1,\ldots,v_{p_0},y_1,\ldots,y_{p_1}\}$ is a primitive collection of $P$ and 
its degree is equal to $p_0+p_1 - (d_1+1)$, we obtain that $p_0+p_1 - (d_1+1) \geq 1$ by Proposition \ref{tokuchou}. 
Hence $d_1=p_0+p_1-2$. 
\end{itemize}
By summarizing these, we obtain the desired primitive relations \eqref{primrel}. 

\smallskip

{\bf (c) $\Longrightarrow$ (b)} Let 
\begin{align*}
&v_i=\eb_i, \; i=1,\ldots,a-1, \;\; 
v_a=-\eb_1-\cdots-\eb_{a+2b-2}+(a+b-1) \eb_{a+2b-1}, \\
&y_j=\eb_{a-1+j}, \; j=1,\ldots,b-1, \;\; y_b=\eb_{a+b-1}+\cdots+\eb_{a+2b-2}, \\
&z=-\eb_{a+2b-1}, \;\; t=\eb_{a+2b-1}, \\
&u_j=\eb_{a+b-1+j}, \; j=1,\ldots,b-1, \;\; u_b=\eb_a+\cdots+\eb_{a+b-1}-\eb_{a+2b-1}, 
\end{align*}
let $V=\{v_1,\ldots,v_a\}, Y=\{y_1,\ldots,y_b\}, U=\{u_1,\ldots,u_b\}$ 
and let $P=\con( V \cup Y \cup \{z,t\} \cup U)$. Then the primitive relations in $P$ are 
of the forms \eqref{primrel}. Our work is to prove that this $P$ is I-isolated. 

\noindent\underline{$\not\exists Q \subset P$} : 
First, we prove there is no smooth Fano $n$-polytope $Q$ with $n+2$ vertices such that $V(Q) \subset V(P)$. 

Suppose, on the contrary, that there is a smooth Fano $n$-polytope $Q$ with $n+2$ vertices 
such that $V(Q)=V(P) \setminus \{w\}$, where $w \in V(P)=V \cup Y \cup \{z,t\} \cup U$. 

Assume that $w \in \{z,t\}$. In this case, although there must be a primitive collection $A \in$ PC$(Q)$ 
with $\sum_{v \in A}v=0$ by Proposition \ref{pic2}, no non-empty subset of $V(Q)=V(P) \setminus \{w\}$ add to 0, a contradiction. 

Assume that $w \in V$. Then $w=v_i$ for some $1 \leq i \leq a$. 
On the other hand, for each $1 \leq j \leq a-1$ (resp. $j=a$), 
the $j$th entry of each vertex of $P$ except for $v_j$ is nonpositive (resp. nonnegative). 
Thus $Q$ cannot contain the origin in its interior, a contradiction. 

Assume that $w \in Y \cup U$. Let $A$ be a primitive collection of $Q$ with $\sum_{v \in A}v=0$. 
Then $A=\{z,t\}$. By Proposition \ref{pic2}, $\sum_{x \in (V \cup Y \cup U) \setminus \{w\}}x$ 
should be written as a linear combination of $z$ and $t$, a contradiction. 

%Hence we conclude that there is no smooth Fano $n$-polytope $Q$ with $n+2$ vertices with $V(Q) \subset V(P)$. 

\noindent\underline{$\not\exists R \supset P$} : 
Next, we prove there is no smooth Fano $n$-polytope $R$ with $n+4$ vertices such that $V(P) \subset V(R)$. 

Suppose that there is a smooth Fano $n$-polytope $R$ with $n+4$ vertices 
such that $V(R)=V(P) \cup \{w\}$ for some new lattice point $w$. 

Since $t$ is a vertex of $R$, by Lemma \ref{ext}, the relation 
\begin{equation}\label{1}
v_1+\cdots+v_a+y_1+\cdots+y_b=(a+b-1)t
\end{equation}
is a primitive relation in $R$. Hence $\con((V \setminus \{v\}) \cup Y \cup \{t\} )$ is a face of $R$ 
for every $v \in V$. Since we have the relation 
\begin{equation}\label{2} t+u_1+\cdots+u_b=y_1+\cdots+y_b, \end{equation}
by Lemma \ref{ext}, we obtain that \eqref{2} is also a primitive relation in $R$ and 
$$\con((V \setminus \{v\}) \cup Y \cup \{t\} \cup (U \setminus \{u\})) \text{ and } 
\con((V \setminus \{v\}) \cup Y \cup U)$$ 
are the facets of $R$ for every $v \in V$ and $u \in U$. Moreover, 
since $\con((V \setminus \{v\}) \cup Y \cup \{t\} \cup (U \setminus \{u\}))$ is a face of $R$, 
we obtain that $$\con(V \cup (Y \setminus\{y\}) \cup \{t\} \cup (U \setminus \{u\}))$$ 
is a facet of $R$ for every $y \in Y$ and $u \in U$ by \eqref{1}. In addition, from the relation 
\begin{equation}\label{3} y_1+\cdots+y_b+z=u_1+\cdots+u_b, \end{equation}
since $\con((V \setminus \{v\}) \cup Y \cup U)$ is a face of $R$, we obtain that 
\eqref{3} is also a primitive relation in $R$ and 
$$\con((V \setminus \{v\}) \cup (Y \setminus \{y\}) \cup \{z\} \cup U)$$ 
is also a facet of $R$ for every $v \in V$ and $y \in Y$. 

Therefore, $R$ contains the following four kinds of facets: 
\begin{align*}
&\con((V \setminus \{v\}) \cup Y \cup \{t\} \cup (U \setminus \{u\})), \text{ where } v \in V \text{ and } u \in U; \\
&\con((V \setminus \{v\}) \cup Y \cup U), \text{ where } v \in V; \\
&\con(V \cup (Y \setminus\{y\}) \cup \{t\} \cup (U \setminus \{u\})), \text{ where } y \in Y \text{ and } u \in U; \\
&\con((V \setminus \{v\}) \cup (Y \setminus \{y\}) \cup \{z\} \cup U), \text{ where } v \in V \text{ and } y \in Y. 
\end{align*}
These are also the facets of $P$. Thus, for any these facets $F$, $w$ is not contained in $\cn(F)$. 
Therefore, $w$ is contained in the cone generated by the remaining facet of $P$, i.e., 
$$w \in \cn(V \cup (Y \setminus \{y\}) \cup \{z\} \cup (U \setminus \{u\}))$$ 
for some $y \in Y$ and $u \in U$. Without loss of generality, we may assume that 
$w \in \cn(V \cup (Y \setminus \{y_b\}) \cup \{z\} \cup (U \setminus \{u_b\}))$. Let 
\begin{align*}
w=c_1v_1+\cdots+c_{a-1}v_{a-1}+c_ay_1+\cdots+&c_{a+b-2}y_{b-1}+c_{a+b-1}v_a+ \\
&c_{a+b}u_1+\cdots+c_{a+2b-2}u_{b-1}+c_{a+2b-1}z, 
\end{align*}
where $c_i \in \ZZ_{\geq 0}$. Let $F=\con(V \cup Y' \cup \{t\} \cup U')$, 
where $Y'=Y \setminus \{y_b\}$ and $U'=U \setminus \{u_b\}$. Then $F$ is a facet of $R$. 
Let $F'$ be the unique facet of $R$ such that $F \cap F'$ is a ridge (i.e. the face of dimension $n-2$) 
of $R$ with $V(F) \setminus V(F')=\{t\}$. Then it must be satisfied that $V(F') \setminus V(F) = \{w\}$. 
In fact, for each $w' \in \{y_b,u_b\}$, $\con(V \cup Y' \cup U' \cup \{w'\})$ 
cannot be a face of $R$ because $\con(V \cup Y)$ and $\con(V \cup U)$ cannot be a face, 
and for $z$, $\con(V \cup Y' \cup \{z\} \cup U')$ is not a face by our assumption. 
Hence, by \cite[Lemma 2.1]{Obro08}, $t+w$ is in the linear subspace spanned by $V \cup Y' \cup U'$. 
Therefore, from the relation $z+t=0$, we have $c_{a+2b-1}=1$. Moreover it follows from \cite[Lemma 2.1]{Obro08} again that 
we have $$1 > \langle \ab_F, w \rangle = \sum_{i=1}^{a+2b-2} c_i - 1 > \langle \ab_F, z \rangle = -1,$$ 
where $\ab_F$ is the lattice vector defining $F$, i.e., $F=\{\alpha \in P : \langle \ab_F,\alpha \rangle = 1\}$. 
Thus $\sum_{i=1}^{a+2b-2} c_i=1$. 
Namely, $w$ can be written like $w=z + w''$ with some $w'' \in V \cup Y' \cup U'$. 
Let $G=\con((V \setminus \{v\}) \cup Y \cup U)$. Since $G$ is a facet of $R$ for each $v \in V$ 
and the relation $y_1+\cdots+y_b+w = u_1+\cdots+u_b + w''$ holds, 
we see that $w$ is also contained in $G$, i.e., $R$ is not simplicial, a contradiction. 

%Hence we conclude that there is no smooth Fano $n$-polytope $R$ with $n+4$ vertices such that $V(P) \subset V(R)$. 

{\bf((b) $\Longrightarrow$ (a))} This is obvious. 
\end{proof}

Next, we provide a family of I-isolated smooth Fano polytopes.

\begin{Theorem}\label{general}
Let $a \geq 2$, $b \geq 1$, $k \geq 1$, and $l_j \geq 1$ for $j=1,\ldots,k$ be integers. 
Then there exists an I-isolated (in particular, F-isolated) smooth Fano polytope $P$ 
of dimension $n=a+2b-1+\sum_{j=1}^k l_j$ with $n+k+3$ vertices whose primitive relations are of the forms
\begin{equation}
\begin{aligned}\label{primrel1}
v_1+\cdots+v_a+y_1+\cdots+y_b&=(a+b-1)t, \\
y_1+\cdots+y_b+z&=u_1+\cdots+u_b, \\
z+t&=0, \\
t+u_1+\cdots+u_b&=y_1+\cdots+y_b, \\
u_1+\cdots+u_b+v_1+\cdots+v_a&=(a+b-2)t, \\
w_{1,1}+\cdots+w_{l_1+1,1}&=\alpha_{1,1}+\cdots+\alpha_{l_1,1}, \\
w_{1,2}+\cdots+w_{l_2+1,2}&=\alpha_{1,2}+\cdots+\alpha_{l_2,2}, \\
&\cdots \\
w_{1,k}+\cdots+w_{l_k+1,k}&=\alpha_{1,k}+\cdots+\alpha_{l_k,k}, 
\end{aligned}
\end{equation}
where $V(P)=\{v_1,\ldots,v_a,y_1,\ldots,y_b,z,t,u_1,\ldots,u_b\} \cup \bigcup_{j=1}^k \{w_{1,j},\ldots,w_{l_j+1,j}\}$ 
and $\{\alpha_{i,j} : 1 \leq i \leq l_j, 1 \leq j \leq k\} = \{y_q,u_q : 1 \leq q \leq b\}$. 
\end{Theorem}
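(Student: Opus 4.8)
\textbf{Proof proposal for Theorem~\ref{general}.}

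The plan is to build $P$ as an explicit lattice polytope generalizing the construction used in the implication (c)$\Longrightarrow$(b) of Theorem~\ref{pic3cha}, and then to show by the same two-pronged argument ("$\not\exists Q \subset P$" and "$\not\exists R \supset P$") that $P$ is I-isolated. First I would write down coordinates: keep the vertices $v_i, y_j, z, t, u_j$ essentially as in Theorem~\ref{pic3cha} (living in the first $a+2b-1$ coordinates), and for each block $j=1,\dots,k$ introduce $l_j$ fresh coordinate directions. The vectors $w_{1,j},\dots,w_{l_j,j}$ should be (a relabeling of) the corresponding unit vectors in the $j$-th block, and $w_{l_j+1,j}$ should be chosen so that $w_{1,j}+\cdots+w_{l_j+1,j}=\alpha_{1,j}+\cdots+\alpha_{l_j,j}$ holds, where the $\alpha_{i,j}$ partition $\{y_q,u_q\}$; concretely $w_{l_j+1,j} = \alpha_{1,j}+\cdots+\alpha_{l_j,j} - (w_{1,j}+\cdots+w_{l_j,j})$. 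One must check this is a smooth Fano polytope: I would verify via Proposition~\ref{tokuchou} that every primitive collection has positive degree. The primitive collections are the five "core" ones from \eqref{primrel} (each of positive degree by the computation already done in Theorem~\ref{pic3cha}) together with the $k$ new ones $\{w_{1,j},\dots,w_{l_j+1,j}\}$, each of degree $(l_j+1)-l_j=1>0$; one also checks no \emph{other} primitive collections are created, which is where the care in choosing coordinates matters — the $\alpha_{i,j}$ being spread across distinct coordinate blocks keeps the combinatorics of the fan a "join-like" extension of the $n'=a+2b-1$ polytope.

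Next, the non-existence of a smaller smooth Fano polytope $Q$ with $V(Q)=V(P)\setminus\{w\}$: I would run exactly the case analysis of Theorem~\ref{pic3cha}. If $w\in\{z,t\}$, then no nonempty subset of the remaining vertices sums to $0$ (the $w_{\ast,j}$ directions contribute only nonnegatively-spanned relations and the core vanishing relation $z+t=0$ is destroyed), contradicting Proposition~\ref{pic2}-type constraints. If $w=v_i$, a coordinate-sign argument shows the origin leaves the interior. If $w\in Y\cup U\cup\bigcup_j\{w_{\ast,j}\}$, then one uses that the unique vanishing primitive collection must still be $\{z,t\}$ and that the sum of all remaining vertices cannot be written as an integer combination of $z,t$ alone — again via Proposition~\ref{pic2}. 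The only genuinely new subcase is $w$ one of the $w_{i,j}$, and here the block structure makes the obstruction transparent: removing a $w_{i,j}$ either kills the $j$-th relation entirely or leaves a relation whose coefficients violate Proposition~\ref{pic2}.

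The main obstacle, as in Theorem~\ref{pic3cha}, will be ruling out a larger polytope $R$ with $V(R)=V(P)\cup\{w\}$. The strategy is: since $t$ (and then, inductively, the other distinguished vertices) are vertices of $R$, repeated application of Lemma~\ref{ext} to the relations \eqref{1}--\eqref{3} of \eqref{primrel} forces $R$ to contain the same list of core facets as $P$, and Lemma~\ref{ext} applied to the $k$ extra relations $w_{1,j}+\cdots+w_{l_j+1,j}=\alpha_{1,j}+\cdots+\alpha_{l_j,j}$ (whose right-hand faces $\con(\{\alpha_{1,j},\dots,\alpha_{l_j,j}\})$ are faces of $P$) pins down the remaining facets as well. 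One then checks that $w$ cannot lie in the cone over any of these facets except one distinguished cone, and on that cone one repeats the ridge-walk/$\langle\ab_F,\cdot\rangle$ argument (using \cite[Lemma 2.1, 3.1]{Obro08}) verbatim from Theorem~\ref{pic3cha} to conclude $w=z+w''$ for some already-present vertex $w''$, which then forces $R$ to be non-simplicial — a contradiction. The delicate point is bookkeeping: one must confirm that introducing the extra blocks does not open up a new facet cone of $P$ into which $w$ could fall and escape the argument; I expect this to follow because each $w_{i,j}$-relation has degree $1$, so Lemma~\ref{ext} is as rigid there as for the core relations, and the coordinate directions of distinct blocks are independent, so the facet structure of $P$ is combinatorially a controlled extension of the $(a+2b-1)$-dimensional case. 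Finally, "$\not\exists Q$" and "$\not\exists R$" together give I-isolation, and F-isolation is immediate.
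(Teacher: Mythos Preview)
Your construction of $P$ and your treatment of the ``$\not\exists R \supset P$'' direction are essentially the paper's approach and should go through as you describe. The gap is in the ``$\not\exists Q \subset P$'' direction, specifically the case where the removed vertex lies in $Y \cup U$.

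You propose to argue that ``the unique vanishing primitive collection must still be $\{z,t\}$ and the sum of all remaining vertices cannot be written as an integer combination of $z,t$ alone --- again via Proposition~\ref{pic2}.'' But Proposition~\ref{pic2} classifies smooth Fano $n$-polytopes with $n+2$ vertices, whereas here $Q$ has $n+k+2$ vertices with $k\geq 1$; there is no two-relation structure to appeal to, and in particular no reason the complement of $\{z,t\}$ should form a single primitive collection. The argument that worked in Theorem~\ref{pic3cha} simply does not transfer. The paper's actual proof of this case (Lemma~\ref{hodai1}) is substantially more delicate and is the genuinely new ingredient of Theorem~\ref{general}: removing, say, $y_b$, one records for each block $j$ the multiplicities $m_j = |\{i : \alpha_{i,j}=y_b\}|$ and $m_j' = |\{i : \alpha_{i,j}=u_b\}|$, selects an index $r$ maximizing $(m_j-1)/(m_j'+1)$, and builds a linear functional whose $(a{+}b{-}1)$-th coordinate is the possibly non-integral number $\frac{1+m_r'}{m_r+m_r'}-b+2$. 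One then checks by hand that this functional is $\leq 1$ on every vertex of $Q$ and equals $1$ on a specific set of $n$ vertices (including $w_{l_r+1,r}$; the maximality of $r$ is exactly what makes $\langle \ab, w_{l_j+1,j}\rangle \leq 1$ for the other $j$). If $m_r=1$ the face so produced also contains $u_b$, giving $\geq n+1$ vertices on a face, so $Q$ is not simplicial; if $m_r>1$ the functional has a non-integer coefficient, so $Q$ is not reflexive. The case $x\in U$ is symmetric. This interaction between the $W_j$-blocks and the core vertices through the $\alpha_{i,j}$ is precisely what your sketch elides.

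A smaller point: for $x \in W$ the paper does not use a Proposition~\ref{pic2}-type obstruction either; it is a coordinate-sign argument identical to the $x\in V$ case (in the $j$-th coordinate block only $w_{i,j}$ and $w_{l_j+1,j}$ have nonzero entries, with opposite signs, so removing one pushes the origin to the boundary).
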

\begin{proof}
Let 
\begin{align*}
&v_i=\eb_i, \; i=1,\ldots,a-1, \;\; 
v_a=-\eb_1-\cdots-\eb_{a+2b-2}+(a+b-1) \eb_{a+2b-1}, \\
&y_j=\eb_{a-1+j}, \; j=1,\ldots,b-1, \;\; y_b=\eb_{a+b-1}+\cdots+\eb_{a+2b-2}, \\
&z=-\eb_{a+2b-1}, \;\; t=\eb_{a+2b-1}, \\
&u_j=\eb_{a+b-1+j}, \; j=1,\ldots,b-1, \;\; u_b=\eb_a+\cdots+\eb_{a+b-1}-\eb_{a+2b-1}, \\
&w_{i,j}=\eb_{a+2b-1+\sum_{q=1}^{j-1}l_q+i}, \; i=1,\ldots,l_j, \; j=1,\ldots,k, \\
&w_{l_j+1,j}=-\eb_{a+2b-1+\sum_{q=1}^{j-1}l_q+1}-\cdots-\eb_{a+2b-1+\sum_{q=1}^j l_q}+\sum_{i=1}^{l_j} \alpha_{i,j}, \; j=1,\ldots,k, 
\end{align*}
let $V=\{v_1,\ldots,v_a\}, Y=\{y_1,\ldots,y_b\}, U=\{u_1,\ldots,u_b\}$ and 
let $W_j=\{w_{i,j}: 1 \leq i \leq l_j+1\}$ for each $1 \leq j \leq k$ and $W= \bigcup_{j=1}^k W_j$. 
We define $P = \con( V \cup Y \cup \{z,t\} \cup U \cup W)$. 
Then $P$ is a smooth Fano polytope of dimension $n=a+2b-1+\sum_{i=1}^k l_j$ with $n+k+3$ vertices and 
all of its primitive relations are of the forms \eqref{primrel1}. 
By Lemma \ref{hodai1} and Lemma \ref{hodai2} below, we see that $P$ is I-isolated. 
\end{proof}

\begin{Lemma}\label{hodai1}
Let $P$ be the polytope given in the proof of Theorem \ref{general}. 
Then there exists no smooth Fano $n$-polytope $Q$ with $n+k+2$ vertices such that $V(Q) \subset V(P)$. 
\end{Lemma}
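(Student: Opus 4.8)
The plan is to mimic the "$\not\exists Q \subset P$" part of the proof of Theorem \ref{pic3cha}, adapting the argument to the enlarged vertex set that now includes the blocks $W_j$. Suppose for contradiction that $Q$ is a smooth Fano $n$-polytope with $n+k+2$ vertices and $V(Q)=V(P)\setminus\{w\}$ for some $w\in V(P)=V\cup Y\cup\{z,t\}\cup U\cup W$. The polytope $Q$ has $n+k+2$ vertices, i.e. Picard number $k+2$, and I would use Batyrev's structure theory only insofar as it gives me the presence of certain primitive collections; the main tool, as before, is that every primitive collection of $Q$ has positive degree (Proposition \ref{tokuchou}), together with the fact that $Q$ must contain the origin in its interior and be simplicial. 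The goal is to derive a contradiction in each case for the location of the deleted vertex $w$.

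First I would handle $w\in\{z,t\}$. The relation $z+t=0$ is the only way to produce a vanishing sum from vertices of $P$ that involves the ``$t$-direction'', so after deleting $z$ or $t$ there is no nonempty subset of $V(Q)$ summing to $0$; but the $n$-th coordinate direction (the $\eb_{a+2b-1}$ axis) forces some primitive collection of $Q$ to have a vanishing primitive relation, because otherwise the origin could not be interior — more precisely, the set of remaining vertices all lie in a closed halfspace $\{x_{a+2b-1}\ge 0\}$ or $\{x_{a+2b-1}\le 0\}$ after deleting $t$ or $z$ respectively, contradicting that the origin is in the interior. Next, for $w\in V$: exactly as in Theorem \ref{pic3cha}, for $1\le j\le a-1$ the $j$-th coordinate of every vertex of $P$ other than $v_j$ is $\le 0$ (and for $j=a$, after the change of basis, the relevant coordinate of every vertex other than $v_a$ is $\ge 0$), so deleting any $v_i$ destroys interiority of the origin. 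The cases $w\in Y$ and $w\in U$ go as in the earlier proof: the only vanishing primitive relation available uses $\{z,t\}$, and then $\sum_{x}x$ over the remaining vertices of $V\cup Y\cup U\cup W$ would have to be expressible as a combination of $z$ and $t$ alone, which it is not (the $W$-blocks contribute to coordinates disjoint from the $z,t$ axis and cannot cancel).

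The genuinely new case is $w\in W$, i.e. $w=w_{i,j}$ for some block $j$. Here I would split into $w=w_{l_j+1,j}$ (the ``last'' vertex of the block, carrying the negative entries plus the $\alpha$-correction) and $w=w_{i,j}$ with $1\le i\le l_j$ (a standard basis vector in that block). If $w=w_{i,j}$ with $i\le l_j$, then the coordinate $\eb_{a+2b-1+\sum_{q<j}l_q+i}$ is supported only on $w_{i,j}$ (positively) and on $w_{l_j+1,j}$ (negatively): deleting $w_{i,j}$ leaves that coordinate nonpositive on all of $V(Q)$, killing interiority of the origin. If $w=w_{l_j+1,j}$, then the whole set of coordinates indexed by that block becomes nonnegative on $V(Q)$ (only the $w_{i,j}$, $i\le l_j$, touch them, with positive sign), again contradicting that the origin is interior. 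So in every case $Q$ fails to contain the origin in its interior or fails to admit the required vanishing primitive relation, and no such $Q$ exists.

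The main obstacle I anticipate is bookkeeping rather than conceptual: one must be careful that after deleting a vertex the ``halfspace'' argument really applies, which requires checking the precise sign pattern of the explicit coordinates given in the proof of Theorem \ref{general}, and in the cases $w\in Y\cup U$ one must verify that the leftover sum cannot accidentally be a $\ZZ_{\ge0}$-combination of $z$ and $t$ once the $W$-vertices are thrown in — this uses that $\{\alpha_{i,j}\}=\{y_q,u_q\}$ so the sum $\sum_{x\in W}x$ equals $\sum_{j}\big(\text{sum of the }w_{i,j}\text{ with }i\le l_j\big)$ minus those $w$'s plus $\sum_j\sum_i\alpha_{i,j}=\sum_q(y_q+u_q)$, whose nonzero components outside the $z,t$ axis cannot be absorbed. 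I would organize the proof as the same four-bullet case analysis used for Theorem \ref{pic3cha}, inserting a fifth bullet for $w\in W$, and in each bullet invoke either the interiority-of-the-origin obstruction or Proposition \ref{pic2}-type reasoning on vanishing primitive relations.
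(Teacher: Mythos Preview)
Your handling of the cases $w\in V$ and $w\in W$ is fine and matches the paper. The genuine gap is in the cases $w\in Y$ and $w\in U$.

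You write that these ``go as in the earlier proof'': the only vanishing subset is $\{z,t\}$, and then the sum of the remaining vertices of $V\cup Y\cup U\cup W$ would have to be a nonnegative combination of $z$ and $t$. But that reasoning in Theorem~\ref{pic3cha} depended entirely on Proposition~\ref{pic2}: there $Q$ had exactly $n+2$ vertices, so its two primitive collections are complementary and one of them sums to~$0$. Here $Q$ has $n+k+2$ vertices with $k\ge 1$, i.e.\ Picard number $k+2\ge 3$, so Proposition~\ref{pic2} does not apply and there is no reason whatsoever for the complement of $\{z,t\}$ to sum into $\ZZ_{\ge 0}\,z+\ZZ_{\ge 0}\,t$. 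The paper does \emph{not} attempt to carry over this argument. Instead, for $w=y_b$ (and symmetrically for $w=u_b$) it builds an explicit linear functional $\mathbf a$ depending on the multiplicities $m_j=|\{i:\alpha_{i,j}=y_b\}|$ and $m_j'=|\{i:\alpha_{i,j}=u_b\}|$, chooses an index $r$ maximizing $(m_j-1)/(m_j'+1)$, and exhibits a supporting hyperplane for a specific face $F$ of $Q$; then either $m_r=1$ forces $u_b\in F$ so $F$ has $\ge n+1$ vertices (contradicting simpliciality), or $m_r>1$ makes $\langle\mathbf a,u_b\rangle=(1+m_r')/(m_r+m_r')$ non-integral (contradicting reflexivity). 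This is the substantive new work in the lemma, and your outline skips it.

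A second, smaller error: your halfspace claim for $w\in\{z,t\}$ is false. After deleting $t$, the vertex $v_a=-\eb_1-\cdots-\eb_{a+2b-2}+(a+b-1)\eb_{a+2b-1}$ still has $(a+2b-1)$-th coordinate $a+b-1>0$; after deleting $z$, the vertex $u_b$ has $(a+2b-1)$-th coordinate $-1$. So the remaining vertices do not lie in a closed halfspace $\{x_{a+2b-1}\le 0\}$ or $\{x_{a+2b-1}\ge 0\}$. The paper argues this case by observing that no nonempty subset of $V(P)\setminus\{x\}$ sums to $0$, and that a smooth Fano polytope must contain such a subset; you would need to justify the latter fact directly rather than via a coordinate halfspace.
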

\begin{proof}
Work with the same notation as in the proof of Theorem \ref{general}. 
Suppose that there is a smooth Fano $n$-polytope $Q$ with $n+k+2$ vertices 
such that $V(Q)=V(P) \setminus \{x\}$, where $x \in V(P)=V \cup Y \cup \{z,t\} \cup U \cup W$. 

Assume that $x \in \{z,t\}$. In this case, although there must be a primitive collection $A \in$ PC$(Q)$ with $\sum_{v \in A}v=0$ 
by Proposition \ref{pic2}, no non-empty subset of $V(P) \setminus \{x\}$ add to 0, 
a contradiction. 

Assume that $x \in V$. Then $x=v_i$ for some $1 \leq i \leq a$. On the other hand, for each $1 \leq j \leq a-1$ (resp. $j=a$), 
the $j$th entry of each vertex of $P$ except for $v_j$ is nonpositive (resp. nonnegative). 
Thus $Q$ cannot contain the origin in its interior, a contradiction. 
Similarly, if $x \in W$, then $Q$ cannot contain the origin in its interior, a contradiction. 

Assume that $x \in Y$. Without loss of generality, we assume $x=y_b$. 
%Then there is $1 \leq j \leq k$ such that $x \in \{\alpha_{1,j}, \ldots,\alpha_{l_j, j}\}\subset Y \cup U$. 
For each $1 \leq j \leq k$, let $m_j=|\{ i : \alpha_{i,j}=y_b, 1 \leq i \leq l_j\}|$ and 
let $m_j'=|\{ i : \alpha_{i,j}=u_b, 1 \leq i \leq l_j\}|$. 
Let $r$ be an index attaining $\max\left\{ \frac{m_j-1}{m_j'+1} : 1 \leq j \leq k \right\}$. 
Consider $F=\con((V \setminus \{v_a\}) \cup (Y \setminus \{y_b\}) \cup \{z\} \cup (U \setminus \{u_b\}) \cup 
(W \setminus \{w_{l_j+1,j} : 1 \leq j \leq k, j \not=r\}))$. Then $F$ is a face of $Q$. In fact, let 
$${\bf a}=\sum_{1 \leq q \leq n, q \not=a+b-1} \eb_q +\left(\frac{1+m_r'}{m_r+m_r'}-b+2\right)\eb_{a+b-1}.$$ 
Then we see the following: 
\begin{itemize}
\item We have $\langle \ab, v_i \rangle = 1$ for each $1 \leq i \leq a-1$, $\langle \ab, t \rangle=1$, 
$\langle \ab, y_j \rangle = \langle \ab, u_j \rangle = 1$ for each $1 \leq j \leq b-1$ and 
$\langle \ab, w_{i,j} \rangle=1$ for each $1 \leq i \leq l_j$ and $1 \leq j \leq k$. 
\item Since $w_{l_r+1,r}=\sum_{j=1}^{l_r} (\alpha_{j,r}-w_{j,r})$, we have 
\begin{align*}
\langle \ab, w_{l_r+1,r} \rangle &= \sum_{j=1}^{l_r} \langle \ab, \alpha_{j,r} \rangle - \sum_{j=1}^{l_r} \langle \ab, w_{j,r} \rangle \\
&=(m_r \langle \ab, y_b \rangle + m_r' \langle \ab, u_b \rangle + (l_r-m_r-m_r')) - l_r \\
&=m_r \left(1+\frac{1+m_r'}{m_r+m_r'}\right) + m_r' \frac{1+m_r'}{m_r+m_r'} - m_r -m_r'=1. 
\end{align*}
\item We have $\langle \ab, v_a \rangle= -\frac{1+m_r'}{m_r+m_r'} < 1$ and $\langle \ab, z \rangle = -1 < 1$. 
\item Moreover, we have $\langle \ab, u_b \rangle = \frac{1+m_r'}{m_r+m_r'}$. 
Thus $\langle \ab, u_b \rangle=1$ if $m_r=1$, otherwise $\langle \ab, u_b \rangle < 1$. 
\item In addition, for each $1 \leq j \leq k$, we also have 
\begin{align*}
\langle \ab, w_{l_j+1,j} \rangle &= \sum_{i=1}^{l_j} \langle \ab,\alpha_{i,j} \rangle - \sum_{i=1}^{l_j} \langle \ab, w_{i,j}\rangle  \\
&=m_j\left( 1 + \frac{1+m_r'}{m_r+m_r'}\right) + m_j'\frac{1+m_r'}{m_r+m_r'} +(l_j- m_j - m_j')-l_j \\
&=\frac{m_j(m_r'+1)-m_j'(m_r-1)}{m_r+m_r'} \leq \frac{m_r+m_r'}{m_r+m_r'} =1. 
\end{align*}
Here, the inequality $m_j(m_r'+1)-m_j'(m_r-1) \leq m_r+m_r'$ follows from the the maximality of $\frac{m_r-1}{m_r'+1}$, 
i.e., this inequality is equivalent to $\frac{m_r-1}{m_r'+1} \geq \frac{m_j-1}{m_j'+1}$. 
\end{itemize}
If $m_r=1$, then $F$ contains at least $n+1$ vertices since $u_b \in F$. Hence, $Q$ is not simplicial, a contradiction. 
If $m_r > 1$, then $\frac{1+m_r'}{m_r+m_r'}$ is not an integer. Hence, $Q$ is not reflexive, a contradiction.

Assume that $x \in U$. Without loss of generality, we assume $x=u_b$. 
%Then there is $1 \leq j \leq k$ such that $x \in \{\alpha_{1,j}, \ldots,\alpha_{l_j, j}\}$. 
For each $1 \leq j \leq k$, let $m_j$ and $m_j'$ as above. 
Let $s$ be the index attaining $\max\left\{ \frac{m_j'-1}{m_j+1} : 1 \leq j \leq k \right\}$. 
Consider $G=\con((V \setminus \{v_a\}) \cup (Y \setminus \{y_b\}) \cup \{z\} \cup (U \setminus \{u_b\}) \cup 
(W \setminus \{w_{l_j+1,j} : 1 \leq j \leq k, j\not=s\}))$. Then $G$ is a face of $Q$. In fact, let 
$${\bf b}=\sum_{1 \leq q \leq n, q \not=a+b-1, q\not=a+2b-1} \eb_q +\left(\frac{1+m_s}{m_s+m_s'}-b+1 \right)\eb_{a+b-1}-\eb_{a+2b-1}.$$ 
Then we see the following: 
\begin{itemize}
\item We have $\langle \bb, v_i \rangle = 1$ for each $1 \leq i \leq a-1$, $\langle \bb, z \rangle=1$, 
$\langle \bb, y_j \rangle = \langle \bb, u_j \rangle = 1$ for each $1 \leq j \leq b-1$ and 
$\langle \bb, w_{i,j} \rangle=1$ for each $1 \leq i \leq l_j$ and $1 \leq j \leq k$. 
\item %Since $w_{l_s+1,s}=\sum_{j=1}^{l_s} (\alpha_{j,s}-w_{j,s})$, 
We have 
\begin{align*}
\langle \bb, w_{l_s+1,s} \rangle &= \sum_{j=1}^{l_s} \langle \bb, \alpha_{j,s} \rangle - \sum_{j=1}^{l_s} \langle \bb, w_{j,s} \rangle \\
&=(m_s \langle \bb, y_b \rangle + m_s' \langle \bb, u_b \rangle + (l_s-m_s-m_s')) - l_s \\
&=m_s \frac{1+m_s}{m_s+m_s'} + m_s' \left( 1+ \frac{1+m_s}{m_s+m_s'}\right) - m_s -m_s'=1. 
\end{align*}
\item We have $\langle \bb, v_a \rangle < -2a-2b+3 < 1$ and $\langle \bb, t \rangle = -1 < 1$. 
\item Moreover, we have $\langle \bb, y_b \rangle = \frac{1+m_s}{m_s+m_s'}$. 
Thus we see that $\langle \bb, y_b \rangle=1$ if $m_s'=1$, otherwise $\langle \bb, y_b \rangle < 1$. 
\item In addition, for each $1 \leq j \leq k$, we also have 
\begin{align*}
\langle \bb, w_{l_j+1,j} \rangle &= \sum_{i=1}^{l_j} \langle \bb,\alpha_{i,j} \rangle - \sum_{i=1}^{l_j} \langle \bb, w_{i,j}\rangle  \\
&=m_j \frac{1+m_s}{m_s+m_s'} + m_j' \left( 1 + \frac{1+m_s}{m_s+m_s'} \right) +(l_j- m_j - m_j')-l_j \\
&=\frac{m_j'(m_s+1)-m_j(m_s'-1)}{m_s+m_s'} \leq \frac{m_s+m_s'}{m_s+m_s'} =1. 
\end{align*}
Here, the inequality $m_j'(m_s+1)-m_j(m_s'-1) \leq m_s+m_s'$ follows from the the maximality of $\frac{m_s'-1}{m_s+1}$, 
i.e., this inequality is equivalent to $\frac{m_s'-1}{m_s+1} \geq \frac{m_j'-1}{m_j+1}$. 
\end{itemize}
If $m_s'=1$, then $G$ contains at least $n+1$ vertices since $y_b \in G$. Hence, $Q$ is not simplicial, a contradiction. 
If $m_s' > 1$, then $\frac{1+m_s}{m_s+m_s'}$ is not an integer. Hence, $Q$ is not reflexive, a contradiction. 
%
%Therefore, we conclude that there is no smooth Fano $n$-polytope $Q$ with $n+k+2$ vertices such that $V(Q) \subset V(P)$. 
\end{proof}

\begin{Lemma}\label{hodai2}
Let $P$ be the polytope given in the proof of Theorem \ref{general}. 
Then there exists no smooth Fano $n$-polytope $R$ with $n+k+4$ vertices such that $V(P) \subset V(R)$. 
\end{Lemma}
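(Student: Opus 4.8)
The plan is to mimic closely the argument for ``$\not\exists R \supset P$'' in the proof of Theorem \ref{pic3cha}, using the extra $k$ primitive relations from \eqref{primrel1} only to pin down where the new vertex $w$ can live. Assume for contradiction that $R$ is a smooth Fano $n$-polytope with $V(R)=V(P)\cup\{w\}$ for a new lattice point $w$. First I would run exactly the same chain of applications of Lemma \ref{ext} as in Theorem \ref{pic3cha}: since $t\in V(R)$ and $\con(\{t\})$ is a face, relation \eqref{1} (i.e. $v_1+\cdots+v_a+y_1+\cdots+y_b=(a+b-1)t$) stays primitive in $R$, which forces the faces $\con((V\setminus\{v\})\cup Y\cup\{t\})$; then \eqref{2} gives $\con((V\setminus\{v\})\cup Y\cup\{t\}\cup(U\setminus\{u\}))$ and $\con((V\setminus\{v\})\cup Y\cup U)$ as facets; \eqref{1} again upgrades these to $\con(V\cup(Y\setminus\{y\})\cup\{t\}\cup(U\setminus\{u\}))$; and \eqref{3} gives $\con((V\setminus\{v\})\cup(Y\setminus\{y\})\cup\{z\}\cup U)$. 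In parallel, for each $1\le j\le k$, because $\con(W_j\setminus\{w_{i,j}\})$ is a face for every $i$ and $\deg$ of the $j$th collection equals $1$, Lemma \ref{ext} makes the relation $w_{1,j}+\cdots+w_{l_j+1,j}=\alpha_{1,j}+\cdots+\alpha_{l_j,j}$ primitive in $R$ as well, so the facet structure away from the ``$W$-directions'' is unchanged.

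The key point is then the same counting argument: the facets listed above are facets of both $P$ and $R$, and the cone over each of them does not contain $w$, so $w$ must lie in the cone over the one remaining facet of $P$, namely (after relabeling) $\con(V\cup(Y\setminus\{y_b\})\cup\{z\}\cup(U\setminus\{u_b\})\cup(W\setminus\{w_{l_j+1,j}:j\}))$ — here I expect the extra vertices $w_{l_j+1,j}$ to be excluded exactly as the $y_b,u_b$ are, because $\con(V\cup Y)$ and $\con(V\cup U)$ are not faces. Writing $w$ as a nonnegative integer combination of the generators of that cone, and letting $F=\con(V\cup(Y\setminus\{y_b\})\cup\{t\}\cup(U\setminus\{u_b\})\cup(W\setminus\{w_{l_j+1,j}:j\}))$ be the corresponding facet of $R$, I would look at the facet $F'$ of $R$ adjacent to $F$ across the ridge $F\setminus\{t\}$. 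As in Theorem \ref{pic3cha}, none of $y_b$, $u_b$, the $w_{l_j+1,j}$, or $z$ can be the vertex $V(F')\setminus V(F)$ — $y_b$ and $u_b$ because $\con(V\cup Y)$, $\con(V\cup U)$ fail to be faces; $w_{l_j+1,j}$ because the relation $w_{1,j}+\cdots+w_{l_j+1,j}=\alpha_{1,j}+\cdots+\alpha_{l_j,j}$ with $\alpha_{i,j}\in Y\cup U$ would again force $\con(V\cup Y)$ or $\con(V\cup U)$ to be a face; and $z$ by the assumption $w\in\cn(\cdots)$. Hence $V(F')\setminus V(F)=\{w\}$, and \cite[Lemma 2.1]{Obro08} applied to the relation $z+t=0$ gives, just as before, the coefficient of $z$ in $w$ equal to $1$ and then $\sum$ of the remaining coefficients equal to $1$, so $w=z+w''$ for a single generator $w''$ of the cone.

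Finally I would derive the contradiction exactly as in Theorem \ref{pic3cha}: from $w=z+w''$ and relation \eqref{3} one gets $y_1+\cdots+y_b+w=u_1+\cdots+u_b+w''$, and since $G=\con((V\setminus\{v\})\cup Y\cup U)$ is a facet of $R$ for each $v\in V$, the point $w$ also lies in $\cn(G)$, so $R$ is not simplicial — a contradiction. The only genuinely new bookkeeping compared with Theorem \ref{pic3cha} is checking that the $w_{l_j+1,j}$ behave like the ``extra'' vertices $y_b,u_b$ in the face/non-face dichotomy; I expect this to be the main obstacle, and it should be handled by the same primitive-relation reasoning (a face cannot contain a primitive collection, and $\{w_{1,j},\ldots,w_{l_j+1,j}\}\subseteq$ the vertices forcing $Y$ or $U$ back in via $\alpha_{i,j}\in Y\cup U$). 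Everything else is a line-by-line transcription of the earlier argument with $W$ carried along passively.
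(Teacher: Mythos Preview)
Your strategy is essentially the paper's, but the final contradiction step has a real gap that is specific to the $k\ge 1$ situation. You take $G=\con((V\setminus\{v\})\cup Y\cup U)$ and call it a facet of $R$; once $k\ge 1$ this set has only $a+2b-1<n$ vertices, so $G$ is at best a proper face. More to the point, your $w''$ now ranges over $V\cup Y'\cup U'\cup\bigcup_j W_j'$, and when $w''=w_{i,j}\in W_j'$ we have $w''\notin V(G)$ for any choice of $v$, so the relation $y_1+\cdots+y_b+w=u_1+\cdots+u_b+w''$ no longer forces $w$ onto the supporting hyperplane of $G$ (or of any facet containing $G$). The paper closes this gap by arguing instead that for \emph{every} $\gamma\in V\cup Y'\cup U'\cup\bigcup_j W_j'$ there is a face $F''$ of $R$ containing all of $Y\cup U\cup\{\gamma\}$---for $\gamma\in W_j'$ one uses the already-established face $\con\bigl(Y\cup U\cup\bigcup_j(W_j\setminus\{w^{(j)}\})\bigr)$ with $w^{(j)}\neq\gamma$---and then runs the linear-functional argument on a facet through $F''$ to obtain the non-simpliciality contradiction. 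So the ``$W$ carried along passively'' slogan fails precisely here: the new vertex can interact with $W$ through $w''$.

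Two smaller slips worth fixing. To invoke Lemma~\ref{ext} on the $j$th relation you need $\con(\{\alpha_{1,j},\ldots,\alpha_{l_j,j}\})$ (the right-hand side) to be a face of $R$, not $\con(W_j\setminus\{w_{i,j}\})$; this holds because $\{\alpha_{i,j}\}\subset Y\cup U$ and $\con(Y\cup U)$ has already been shown to be a face. And the clean reason $w_{l_j+1,j}$ cannot be the vertex in $V(F')\setminus V(F)$ is simply that $F'$ would then contain the entire primitive collection $W_j$, which no face can do; the detour through ``forcing $\con(V\cup Y)$ or $\con(V\cup U)$ to be a face'' is unnecessary.
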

\begin{proof}
Suppose that there is a smooth Fano $n$-polytope $R$ with $n+k+4$ vertices such that 
$V(R)=V(P) \cup \{x\}$, where $x$ is some new lattice point. 

Since the linear relation 
\begin{align}\label{r1}
v_1+\cdots+v_a+y_1+\cdots+y_b=(a+b-1)t
\end{align}
among the vertices of $R$ holds and $t$ is a vertex of $R$, by Lemma \ref{ext}, this is a primitive relation in $R$. 
Thus, $\con(Y)$ is a face of $R$. Hence, the relation 
\begin{align}\label{r2}
t+u_1+\cdots+u_b=y_1+\cdots+y_b
\end{align}
is also a primitive relation in $R$ by Lemma \ref{ext} again. Thus, $\con(U)$ is also a face of $R$. Hence, the relation 
\begin{align}\label{r3}
y_1+\cdots+y_b+z=u_1+\cdots+u_b
\end{align}
is also a primitive relation in $R$. Moreover, $\con(Y \cup U)$ is also a face of $R$ 
and so is $\con(T)$ for each $T \subset Y \cup U$. Hence, 
\begin{align}\label{rj}
w_{1,j} + \cdots + w_{l_j+1,j}=x_{1,j} + \cdots + x_{l_j,j}
\end{align}
is also a primitive relation in $R$ for each $1 \leq j \leq k$. 
Thus, by Lemma \ref{ext} again, 
$\con(Y \cup U \cup \bigcup_{j=1}^k (W_j \setminus \{w^{(j)}\}))$ is also a face of $R$ for each $w^{(j)} \in W_j$, 
and so is $\con(Y \cup \{t\} \cup (U \setminus \{u\}) \cup \bigcup_{j=1}^k (W_j \setminus \{w^{(j)}\}))$ for each $u \in U$ 
by \eqref{r2}. From \eqref{r1} and Lemma \ref{ext}, we obtain that 
$$\con((V \setminus \{v\}) \cup Y \cup \{t\} \cup (U \setminus \{u\}) \cup \bigcup_{j=1}^k (W_j \setminus \{w^{(j)}\}))$$
is a facet of $R$ for each $v \in V$, $u \in U$ and $w^{(j)} \in W_j$. 
By \eqref{r2} and Lemma \ref{ext}, we obtain that 
$$\con((V \setminus \{v\}) \cup Y \cup U \cup \bigcup_{j=1}^k (W_j \setminus \{w^{(j)}\}))$$ 
is also a facet of $R$. By the similar discussions, we obtain 
\begin{align*}
&\con(V \cup (Y \setminus \{y\}) \cup \{t\} \cup (U \setminus \{u\}) \cup \bigcup_{j=1}^k (W_j \setminus \{w^{(j)}\})) \text{ and }\\
&\con((V \setminus \{v\}) \cup (Y \setminus \{y\}) \cup \{z\} \cup U \cup \bigcup_{j=1}^k (W_j \setminus \{w^{(j)}\}))
\end{align*}
are also the facets of $R$ from \eqref{r1}, \eqref{r2}, \eqref{r3} and Lemma \ref{ext}. 

On the other hand, these four kinds of facets of $R$ are also the facets of $P$. 
In addition, $P$ contains one more kind of facet, which is of the form 
$$\con(V \cup (Y \setminus \{y\}) \cup \{z\} \cup (U \setminus \{u\}) \cup \bigcup_{j=1}^k (W_j \setminus \{w^{(j)}\})),$$
where $y \in Y$, $u \in U$ and $w^{(j)} \in W_j$. 

Therefore, $x$ must belong to the cone generated by this facet, i.e., 
\begin{align}\label{assume}
x \in \cn(V \cup (Y \setminus \{y\}) \cup \{z\} \cup (U \setminus \{u\}) \cup \bigcup_{j=1}^k (W_j \setminus \{w^{(j)}\})) 
\end{align}
for some $y \in Y$, $u \in U$ and $w^{(j)} \in W_j$. 

Without loss of generality, we may assume that 
$y=y_b, u=u_b$ and $w^{(j)}=w_{l_j+1,j}$ for each $1 \leq j \leq k$. Let 
\begin{align*}
x &= c_1v_1 + \cdots + c_{a-1}v_{a-1} + c_a y_1 + \cdots + c_{a+b-2}y_{b-1} + c_{a+b-1} v_a  \\
&+ c_{a+b} u_1 + \cdots + c_{a+2b-2} u_{b-1} + c_{a+2b-1}z + c_{a+2b}w_{1,1} + c_{a+2b+1} w_{2,1} + \cdots + c_n w_{l_k,k}. 
\end{align*}
Let $F=\con(V \cup Y' \cup \{t\} \cup U' \cup \bigcup_{j=1}^k W_j')$, 
where $Y'=Y \setminus \{y_b\}$, $U'=U \setminus \{u_b\}$ and $W_j'=W_j \setminus \{w_{l_j+1,j}\}$ for $1 \leq j \leq k$. 
Then $F$ is a facet of $R$. Let $F'$ be the unique facet of $R$ such that $F \cap F'$ 
is a ridge of $R$ with $V(F) \setminus V(F')=\{t\}$. 
Then it must be satisfied that $V(F') \setminus V(F) = \{x\}$. In fact, 
by considering the primitive relations \eqref{primrel1} and the assumption \eqref{assume}, 
the vertices $y_b, z, u_b, w_{l_j+1,j}$ cannot belong to $V(F')$. 
Thus, by \cite[Lemma 2.1]{Obro08}, $x+t$ is in the linear subspace spanned by 
$V \cup Y' \cup U' \cup \bigcup_{j=1}^k W_j'$. 
Therefore, from the relation $z+t=0$, we have $c_{a+2b-1}=1$. Moreover, it follows from \cite[Lemma 2.1]{Obro08} again that 
$$1 > \langle \ab_F, x \rangle = \sum_{1 \leq i \leq n, i \not=a+2b-1} c_i - 1 > \langle \ab_F, z \rangle = -1,$$ 
where $\ab_F$ is the lattice vector defining $F$, i.e., $F=\{\alpha \in P : \langle \ab_F, \alpha \rangle = 1\}$. 
Thus, $\sum_{1 \leq i \leq n, i \not=a+2b-1} c_i=1$. Namely, $x$ can be written like $x=z + \gamma$ 
with some $\gamma \in V \cup Y' \cup U' \cup \bigcup_{j=1}^k W_j'$. 

Now we have the relation $y_1+\cdots+y_b+x=u_1+\cdots+u_b+\gamma$. 
Moreover, for every $\gamma \in V \cup Y' \cup U' \cup \bigcup_{j=1}^k W_j'$, 
there exists a face $F''$ of $R$ containing all $y_1,\ldots,y_b,u_1,\ldots,u_b$ and $\gamma$. 
Thus, $x$ should be also contained in $F''$. This is a contradiction to what $R$ is simplicial. 
%
%
%Therefore, we conclude that there is no smooth Fano $n$-polytope $R$ with $n+k+4$ vertices such that $V(P) \subset V(R)$. 
\end{proof}

As an immediate corollary of Theorem \ref{general}, we obtain the following. 
\begin{Corollary}\label{kei}
Given positive integers $n$ and $\rho$ with $n \geq 5$ and $3 \leq \rho \leq n$, 
there exists an I-isolated smooth Fano $n$-polytope with $n+\rho$ vertices. 
\end{Corollary}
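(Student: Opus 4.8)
The plan is to deduce Corollary~\ref{kei} directly from Theorem~\ref{general} by exhibiting, for each pair $(n,\rho)$ with $n \geq 5$ and $3 \leq \rho \leq n$, a choice of the parameters $a \geq 2$, $b \geq 1$, $k \geq 1$ and $l_1,\ldots,l_k \geq 1$ such that the polytope $P$ produced by that theorem has dimension $n$ and exactly $n+\rho$ vertices. Recall that in Theorem~\ref{general} the resulting polytope has dimension $n = a + 2b - 1 + \sum_{j=1}^k l_j$ and $n + k + 3$ vertices. So the number of extra vertices beyond $n+1$ is $\rho = k + 2$, which is governed entirely by $k$. Hence I would first set $k = \rho - 2$ (so $k \geq 1$ since $\rho \geq 3$), which fixes the vertex count; the remaining freedom in $a$, $b$ and the $l_j$ is then used only to hit the dimension $n$.

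Once $k = \rho - 2$ is fixed, I would take the simplest admissible block sizes $l_1 = \cdots = l_k = 1$, so that $\sum_{j=1}^k l_j = k = \rho - 2$, and then solve $n = a + 2b - 1 + (\rho - 2)$, i.e. $a + 2b = n - \rho + 3$, for integers $a \geq 2$, $b \geq 1$. Set $m := n - \rho + 3$; since $\rho \leq n$ we have $m \geq 3$. If $m$ is odd, take $b = (m-1)/2 \geq 1$ and $a = 1$ — wait, that violates $a \geq 2$; instead for odd $m \geq 3$ take $b = (m-3)/2$ and $a = 3$, but this needs $m \geq 3$ and gives $b \geq 0$, so when $m = 3$ we would need $b = 0$, again not allowed. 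A cleaner split: for any $m \geq 4$ one can write $m = a + 2b$ with $a \in \{2,3\}$ and $b \geq 1$ (take $a = 2, b = (m-2)/2$ if $m$ even, $a = 3, b = (m-3)/2$ if $m$ odd), and the only remaining case is $m = 3$, which forces $\rho = n$; in that boundary case one instead absorbs the deficit into the $l_j$'s by taking, say, $l_1 = 2$ and $l_2 = \cdots = l_k = 1$, i.e. $\sum l_j = k + 1 = \rho - 1$, so that $a + 2b = n - \rho + 2 = 2$, giving $a = 2$, $b = 0$ — still illegal. I would therefore handle $\rho = n$ by instead increasing a single $l_j$ further: keep $a = 2$, $b = 1$ (forced minimum, contributing $a + 2b - 1 = 3$ to the dimension), and choose the $l_j$ so that $\sum_{j=1}^k l_j = n - 3$, which is possible with $k = \rho - 2 = n - 2 \leq n - 3$ blocks each of size $\geq 1$ precisely when $n - 3 \geq n - 2$ — false. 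So the genuinely tight corner $\rho = n$ needs one more idea.

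The resolution of the corner case is that the constraint is $k = \rho - 2$ blocks, each $l_j \geq 1$, summing to $n - (a+2b-1)$, and with $a \geq 2$, $b \geq 1$ the quantity $a+2b-1$ is at least $3$, so we need $\sum l_j = n - (a+2b-1) \leq n - 3$ while $\sum l_j \geq k = \rho - 2$; feasibility requires $\rho - 2 \leq n - 3$, i.e. $\rho \leq n - 1$. For $\rho = n$ exactly, one extra vertex is needed, and I would obtain it by a different application of Theorem~\ref{general}: note the theorem allows $b \geq 1$, and enlarging $b$ by one shifts the dimension by $2$ while enlarging some $l_j$ shifts it by $1$, so with $k = n-2$ blocks of total size $s$ we get dimension $a + 2b - 1 + s$ and $s \geq n - 2$; taking $a = 2$, $b = 1$, $s = n - 2$ gives dimension $n + 1$, one too many, so instead I take $k = n - 2$, all $l_j = 1$ except reduce — impossible since $l_j \geq 1$. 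Hence for $\rho = n$ I would fall back on Theorem~\ref{pic3cha}, whose polytope already has $n + 3$ vertices and dimension $n$ for suitable $a, b$ with $n = a + 2b - 1$, covering $\rho = 3$, and then re-examine whether the statement's range should really be $3 \le \rho \le n$ or whether the genuine content is $3 \le \rho \le n-1$ plus the separate case $\rho = 3$; in the writeup I would state the parameter dictionary cleanly, verify the arithmetic $a + 2b - 1 + \sum l_j = n$ and $k = \rho - 2$ in each subcase, and invoke Theorem~\ref{general} (and, where the corner bites, Theorem~\ref{pic3cha}) to conclude that the exhibited $P$ is I-isolated with the required dimension and vertex number.

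The main obstacle, as the discussion above shows, is purely bookkeeping at the boundary $\rho$ close to $n$: one must check that the Diophantine system $a + 2b - 1 + \sum_{j=1}^{\rho-2} l_j = n$ with $a \geq 2$, $b \geq 1$, $l_j \geq 1$ is solvable for every admissible $(n,\rho)$, and pin down exactly which $(n,\rho)$ force which application of the earlier theorems. I expect this to reduce to a short case analysis on the parity of $n - \rho$ and on whether $\rho = 3$, $\rho = n$, or $3 < \rho < n$, with no further mathematical input beyond Theorems~\ref{pic3cha} and~\ref{general}.
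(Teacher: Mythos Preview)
Your approach is the right one --- reduce to Theorem~\ref{general} and Theorem~\ref{pic3cha} --- but a single bookkeeping slip manufactures the entire boundary difficulty you wrestle with. The polytope in Theorem~\ref{general} has $n+k+3$ vertices, and the corollary asks for $n+\rho$ vertices; hence $\rho=k+3$, not $\rho=k+2$ as you write (you confused ``vertices beyond $n$'' with ``vertices beyond $n+1$''). With the correct value $k=\rho-3$, taking $b=1$ and all $l_j=1$ for $\rho\ge 5$ forces $a+2b-1+\sum l_j=n$ to read $a=n-\rho+2$, and $a\ge 2$ holds precisely because $\rho\le n$. The ``corner'' $\rho=n$ is then nothing special at all, and the only case needing separate treatment is $\rho=3$, where $k=0$ falls outside the hypotheses of Theorem~\ref{general} and one invokes Theorem~\ref{pic3cha} instead (with $a=n-3$, $b=2$). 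This is exactly the paper's route.

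There is also a second constraint you overlook. Theorem~\ref{general} requires
\[
\{\alpha_{i,j} : 1\le i\le l_j,\ 1\le j\le k\}=\{y_q,u_q : 1\le q\le b\},
\]
a set of cardinality $2b$, so one needs $\sum_j l_j\ge 2b$. This is why $\rho=4$ must be handled separately: with $k=1$ one is forced to take $l_1\ge 2$ (and then $b=1$, $a=n-3$), whereas for $\rho\ge 5$ the choice $b=1$, $l_j=1$ already gives $\sum l_j=\rho-3\ge 2=2b$. Your parameter search never engages this side condition, so even after correcting the off-by-one in $k$ your case analysis would remain incomplete.
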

\begin{proof}
If $\rho=3$, then we set $a=n-3$ and $b=2$. By Theorem \ref{pic3cha}, 
there exists an I-isolated smooth Fano $n$-polytope with $n+3$ vertices. Assume $\rho \geq 4$. 
\begin{itemize}
\item If $\rho=4$, then we set $a=n-3, b=1, k=1$ and $l_1=2$. 
\item If $\rho \geq 5$, then we set $a = n - \rho + 2, b=1, k = \rho-3$ and $l_j=1$ for each $1 \leq j \leq k$. 
\end{itemize}
Then, by Theorem \ref{general}, there exists an I-isolated smooth Fano $n$-polytope with $n+\rho$ vertices. 
\end{proof}

\begin{Remark}\label{KN}{\em 
(a) By Corollary \ref{kei}, there are I-isolated smooth Fano 5-polytopes with 8, 9, 10 vertices, respectively. 
On the other hand, we have already seen in Section \ref{5dim} or Figure \ref{figure} that 
there are exactly three I-isolated smooth Fano 5-polytopes. 
However, there are I-isolated smooth Fano 6-polytopes whose primitive relations are not of the forms \eqref{primrel}.

\noindent
(b) Moreover, let $P$ be a convex hull of the following 15 lattice points in $\ZZ^7$: 
\begin{align*}
&\eb_1,\; \eb_2,\; \eb_3,\; \eb_4,\; \eb_5,\; \eb_6,\; \pm\eb_7, \\
&-\eb_1+\eb_7,\; \pm(-\eb_2+\eb_7),\; -\eb_3+\eb_6,\; \eb_2-\eb_6,\; -\eb_2-\eb_4+\eb_6,\; \eb_2-\eb_5-\eb_7. 
\end{align*}
%Vertices: [[1,0,0,0,0,0,0],[0,1,0,0,0,0,0],[0,0,1,0,0,0,0],[0,0,0,1,0,0,0],[0,0,0,0,1,0,0],[0,0,0,0,0,1,0],[0,0,0,0,0,0,1],[0,0,0,0,0,0,-1],
%[-1,0,0,0,0,0,1],[0,-1,0,0,0,0,1],[0,1,0,0,0,0,-1],
%[0,0,-1,0,0,1,0],[0,1,0,0,0,-1,0],[0,-1,0,-1,0,1,0],[0,1,0,0,-1,0,-1]]
%
%
Then we can see that $P$ is an I-isolated smooth Fano 7-polytope with 15 vertices. 
This is the smallest example of an I-isolated smooth Fano $n$-polytope with at least $2n+1$ vertices. 
}\end{Remark}

\section*{Acknowledgement}
The author would like to thank to Alexander Kasprzyk and Benjamin Nill for their cooperations. 
Thanks to their computation, the author has succeeded in finding the examples in Remark \ref{KN}. 

The author is partially supported by JSPS Fellowship for Young Scientists 
and by the Grant-in-Aid for Young Scientists (B) $\sharp$26800015 from JSPS.

\end{document}